\newtheorem{theorem}{Theorem}[section]
\newtheorem{lemma}[theorem]{Lemma}
\newtheorem{corollary}[theorem]{Corollary}
\theoremstyle{definition}
\theoremstyle{remark}
\newtheorem{remark}[theorem]{Remark}
\numberwithin{equation}{section}
\newcommand{\la}{\lambda}
\newcommand{\al}{\alpha}
\newcommand{\ep}{\epsilon}
\begin{document}

\title{A generalization of Newton's identity and Macdonald functions}
\author{Wuxing Cai}
\address{School of Sciences,
South China University of Technology, Guangzhou 510640, China}
\email{caiwx@scut.edu.cn}
\author{Naihuan Jing*}
\address{Department of Mathematics,
   North Carolina State University,
   Raleigh, NC 27695-8205, USA}
%   and School of Sciences,
%South China University of Technology, Guangzhou 510640, China}
\email{jing@math.ncsu.edu}
%\footnotetext[1]
\thanks{*Corresponding author: Naihuan Jing}
%\thanks{*Corresponding author}
%\thanks{Jing gratefully acknowledges the support from
%Max-Planck Institute f\"ur Mathematik in Bonn,
%NSF grants and Simons Foundation.}
\keywords{Newton identities, Macdonald polynomials}
\subjclass[2010]{Primary: 05E05; Secondary: 17B69, 05E10}

\begin{abstract}
 A generalization of Newton's identity on symmetric functions is given. Using the generalized Newton identity we
  give a unified method to show the existence of Hall-Littlewood, Jack and Macdonald polynomials. We also give
  a simple proof of the Jing-J\"ozefiak formula for two-row Macdonald functions.
\end{abstract}
\maketitle
\section{Introduction}
Originally, one form of Newton's identity on symmetric polynomials is $ke_k=\sum_{i=1}^k(-1)^{i-1}p_ie_{k-i}$, where $p_i$ and $ e_i$ are power sum symmetric polynomials and elementary symmetric polynomials respectively in finite but arbitrary variables. As this number goes to infinity, the identity becomes an identity in the ring of
 symmetric functions. Applying the standard involution $\omega$ which sends $p_i$ to $(-1)^{i-1}p_i$ and $e_i$ to $s_i$ on this identity, Newton's identity becomes
\begin{equation}\label{F:NewtonIdentity}
ks_k=\sum_{i\geq1}p_is_{k-i},
\end{equation}
where $s_k$ is the one-row Schur function or the $k$th complete symmetric function. Applications of these identities can be found in Galois theory, invariant theory, group theory, combinatorics and so on. Concerning (\ref{F:NewtonIdentity}), one may wonder what would happen to $\sum_{i,j\geq1}p_{i+j}s_{m-i}s_{n-j}$ for a two-row partition $\la=(m,n)$ or generally to $A=\sum_{i_1,\cdots,i_s\geq1}p_{i_1+\cdots+i_s}s_{\la_1-i_1}\cdots s_{\la_s-i_s}$ for a partition $\la$ of length $s$. It is clear that $A$ is homogeneous of degree $|\la|$, thus it should be of the form $\sum_{\mu\vdash|\la|} c_{\la\mu}q_{\mu}$, where $q_\mu=s_{\mu_1}s_{\mu_2}\cdots$. In fact, a corollary of our generalized Newton's formula asserts that $c_{\la\mu}\neq0$ only if $\mu\geq\la$; (see the remark of Corollary \ref{R:directgeneralizedNewton}). For $\la=(m,n)$ the identity is given by Lemma 3.8 in \cite{CJ2}, where it plays a crucial role in the proof of the Laplace-Beltrami operator for Jack functions.

Jack symmetric functions \cite{St} are certain one-parameter deformation of Schur symmetric functions. They are special cases of the more general Macdonald symmetric functions \cite{M} in two free parameters. When the two parameters
take special forms they include a variety of classical symmetric functions including Jack functions and Hall-Littlewood functions.

Two driving problems of Macdonald polynomials are the inner product conjecture and
the positivity conjecture \cite{M}. Cherednik introduced the double affine Hecke algebras (DAHA) to solve the
inner product conjecture by extending the problem to the more general nonsymmetric Macdonald polynomials \cite{C}.
Haiman's proof \cite{Ha} of Garsia-Haiman's $n!$ conjecture (which implies Macdonald positivity conjecture) demonstrated the broadness of Macdonald polynomials and their deep connection with Hilbert schemes and resolution
of singularities. Similar positivity phenomena have also been observed in
Lascoux-Lapointe-Morse $k$-Schur functions \cite{LLM}. Haglund's combinatorial formula for Macdonald polynomials
also relied on more general combinatorial statistics \cite{Hg, HHL} (also \cite{As}). These developments often take place by
generalizing Macdonald polynomials.

On the other hand, Macdonald polynomials have also been studied by exploring different characterizations
and related applications.
The interpolation Macdonald polynomials of Okounkov \cite{O},
Knop and Sahi \cite{KS} (see also Rains \cite{R}, Coskun
and Gustafson \cite{CG}) showed that Macdonald polynomials can be characterized by
the zero patterns. This part of developments demonstrated the importance of Macdonald polynomials in modern
harmonic analysis and related fields. For example, Lascoux gave a new characterization of
Macdonald polynomials by using the Yang-Baxter equation and difference operators \cite{La}.
Interesting connections with Selberg-type integrals have also been studied in
the context of Macdonald polynomials, which have
triggered new studies on more generalized hypergeometric series (see \cite{Ka, W, Sc}).
The most general form of Macdonald type hypergeometric functions has also been studied
by Schlosser \cite{Sc}.

 In this paper, we seek a new characterization and generalization of Macdonald polynomials by using a commutative algebra generalization of Newton's identity in symmetric functions. Based on the generalized Newton formula
 we construct a self-adjoint vertex operator which acts as certain raising operators
  on generalized complete symmetric functions. The eigenvectors of this operator are essentially Macdonald symmetric functions. In fact, two specializations to the operator leads to two different operators; both of the operators diagonalize the original Macdonald functions. One of the operators was considered by Shiraishi (cf. \cite{S}). Applying this method, we hope that a further generalization of Newton's identity or a generalized vertex operator would lead to some generalized Macdonald symmetric functions.

 In Macdonald's original construction \cite{M}, a discrete Casimir operator was used to diagonalize Macdonald functions in finitely many variables; and then passed on to infinitely many variables. The operator considered in this paper
 will directly ensure the triangularity property and thus the existence follows immediately; this can be viewed as
 a generalization of our previous work \cite{CJ2}. As specialization of the differential operator, we also obtain
 an operator that diagonalizes Hall-Littlewood polynomials directly. This suggests that one may develop a theory of Hall-Littlewood polynomials as some eigenfunctions of a differential operator
 directly without reference to Macdonald polynomials. It would be interesting to see if this operator
 has any meaning over p-adic fields as Hall-Littlewood polynomials are spherical functions over $GL_n(\mathfrak p)$
 \cite{M}.

 In the development of classical symmetric functions, vertex operators have been helpful in realization of
 Hall-Littlewood symmetric functions \cite{J}
 as well as rectangular Jack polynomials \cite{CJ}.
Recently we have also derived an iterated vertex operator realization of Jack symmetric functions \cite{CJ3}.
We hope the new vertex operator constructed in this paper will play a role in vertex operator realization of
Macdonald polynomials. It is also interesting to note the similarity of our vertex operator with the operator
considered by Garsia-Haiman \cite{GH}.

This paper is organized as follows. In section 2, we introduce some definitions and recall some simple results about partitions and symmetric functions. In section 3, we generalize Newton's identity in a commutative algebra. In section 4, this formula is used to find a vertex operator that has Macdonald functions as eigenvectors, and we also recover the explicit formula for two-row Macdonald functions.

\section{Partitions and symmetric functions}
A partition $\la=(\la_1,\cdots,\la_s)$ of $n$ is a sequence of weakly decreasing non-negative integers,
and $n$ is the weight of $\la$: $n=|\la|=\sum_i\la_i$, which we also denote as $\la\vdash n$. Sometimes we also write $\la=(1^{m_1}2^{m_2}\cdots)$, where $m_i=m_i(\la)$ is the multiplicity of $i$ in $\la$. The length of $\la$ is defined as $l(\la)=\sum_im_i$ and we also denote $z_\la=1^{m_1}m_1!2^{m_2}m_2!\cdots\in\mathbb{N}$. One defines the union of two partitions $\la\cup\mu$ by $m_i(\la\cup\mu)=m_i(\la)+m_i(\mu)$ for all $i$. Let $\mathcal P_n$ be the set of partitions of weight $n$ and $\mathcal P$ the set of all partitions.
For $\la,\mu\in\mathcal P_n$, the dominance order $\la\geq\mu$ is defined by $\sum_{j\leq i}(\la_j-\mu_j)\geq0$ for all $i$.
We write $\la>\mu$ if $\la\geq\mu$ but $\la\neq\mu$.
%Let us recall the symmetric functions.

The ring $\Lambda$ of symmetric functions in the variables $x_i$ is a $\mathbb Z$-module
with basis $m_{\la}$, $\la\in\mathcal{P}$, where for a partition $\la=(\la_1,\cdots,\la_s)$, the monomial
symmetric function $m_\la(x)=\sum_{\alpha}x_1^{\alpha_1}x_2^{\alpha_2}\cdots$, where $\alpha$ runs over
distinct permutations of $\lambda$.
For example $m_{(n)}=x_1^n+x_2^n+\cdots$, $n\geq1$, is the power sum $p_n$. For convenience,
we define $p_0=1$ and $p_n=0$ when $n<0$. The power sum symmetric functions $p_\la=p_{\la_1}p_{\la_2}\cdots$
form another basis of $\Lambda_F=\Lambda\otimes_\mathbb{Z}F$ for a field $F$ of characteristic $0$.
%$\Lambda_\mathbb{Q}=\Lambda\otimes_\mathbb{Z}\mathbb{Q}$.
The $p_\la$'s with $|\la|=n$ span a subspace of $\Lambda_F$, denoted as $\Lambda_F^n$. An element in $\Lambda_F^n$ is said to be homogeneous of degree $n$.

Let $\ep=(\ep_1, \ep_2, \cdots)$ be a sequence of (finite or infinite) non-zero parameters.
We will denote the extension field $\mathbb{Q}(\epsilon_1,\epsilon_2,\cdots)$ of $\mathbb{Q}$ by $F^\ep$. We use the notation $\Lambda(\ep)$ for the algebra of symmetric functions $\Lambda\otimes_\mathbb{Z}F^\ep$ which has been associated with the scalar product given by
\begin{align} \label{def}
\langle p_{\la}, p_{\mu}\rangle=\delta_{\la
\mu}\epsilon_\la z_\lambda, ~~(\la, \mu \in \mathcal P)
\end{align}
where $\delta$ is the Kronecker symbol, and  $\epsilon_\la=\epsilon_{\la_1}\epsilon_{\la_2}\cdots$, with $\epsilon_0=1$.

Fix an total order in $\mathcal{P}_n$ which is compatible with the dominance order, then we can apply Gram-Schmidt process to the basis $m_\la$'s to get an orthogonal basis $P^\ep_\la$'s such that the leading coefficients
are $1$. In \cite{Ke}, Kerov called this basis generalized Hall-Littlewood (GHL) functions and  characterized this basis from the $\ep_n$'s. Note that the specializations of $\epsilon_n=1, \alpha, (1-t^n)^{-1}$ or $\frac{1-q^n}{1-t^n}$ corresponds to Schur, Jack, Hall-Littlewood or Macdonald functions respectively.

 For each $\la\in\mathcal{P}$, define the generalized complete symmetric function $q_\la^\epsilon=q_{\la_1}^{\epsilon} q_{\la_2}^{\epsilon}\cdots$, where $q^\ep_n$ is given by the generating function:
$$\exp\Big(\sum_{n\geq1}
\frac{z^n}{n\epsilon_n}p_n\Big)=\sum_n q_{n}^\epsilon z^n.$$
Thus $q^\ep_n=0$ for $n<0$ and for $n\geq0$, we have
$$q_{n}^\epsilon=\sum_{\la\vdash n}\frac{p_\la}{z_\la\ep_\la}.$$
From this expression we see that $q^\ep_\la$ is homogeneous of degree $|\la|$.
Similar to the cases of Jack functions and Macdonald functions, the $m_\la$'s and $q_\la$'s are dual to each other as shown in the following lemma. Now assume that $P^\ep_\la$'s and $Q^\ep_\la$'s are dual orthogonal bases of $\Lambda(\ep)$. If $P^\ep_\la$ is of the form $P^\ep_\la=\sum_{\mu\leq\la}C_{\la\mu}m_\mu$ with $C_{\la\la}\neq0$, then $Q^\ep_\mu$ has the form $Q^\ep_\mu=\sum_{\la\geq\mu}D_{\mu\la}q^\epsilon_\la$ with $D_{\mu\mu}\neq0$.

Similar to Macdonald symmetric functions (cf. \cite{M}) we also have
\begin{lemma}
We have $\langle m_\mu, q^\epsilon_\la\rangle=\delta_{\la\mu}$.
\end{lemma}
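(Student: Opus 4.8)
The plan is to deduce the stated duality from a Cauchy-type kernel identity, following the standard template for such dual bases. Introduce the reproducing kernel $K(x,y)=\sum_{\la}\frac{p_\la(x)p_\la(y)}{z_\la\ep_\la}$. Since the defining scalar product (\ref{def}) makes the $p_\la$ orthogonal with $\langle p_\la,p_\la\rangle=\ep_\la z_\la$, the family $\{p_\la/(z_\la\ep_\la)\}$ is precisely the basis dual to $\{p_\la\}$, so $K$ is exactly the kernel built from any pair of dual bases. The first step is to record the general criterion: two homogeneous bases $\{u_\la\}$ and $\{v_\la\}$ of $\Lambda(\ep)$ satisfy $\langle u_\la,v_\mu\rangle=\delta_{\la\mu}$ if and only if $\sum_\la u_\la(x)v_\la(y)=K(x,y)$. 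Granting this, it suffices to prove the single identity $\sum_\la m_\la(y)\,q^\ep_\la(x)=K(x,y)$.

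To justify the criterion, I would expand $u_\la=\sum_\rho a_{\la\rho}p_\rho$ and $v_\mu=\sum_\sigma b_{\mu\sigma}p_\sigma$ in the power-sum basis. Writing $A=(a_{\la\rho})$, $B=(b_{\mu\sigma})$ and $D=\mathrm{diag}(z_\rho\ep_\rho)$, the scalar-product condition becomes $ADB^{T}=I$, while the kernel condition becomes $A^{T}BD=I$; a short matrix manipulation shows these are equivalent (each forces $BD=(A^{T})^{-1}$, hence the other). This is routine linear algebra once one knows the $p_\la$ are orthogonal.

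The core computation is the kernel identity itself. Grouping partitions by their multiplicities $\la=(1^{m_1}2^{m_2}\cdots)$ and summing the resulting product of exponential series factor-by-factor gives $K(x,y)=\exp\big(\sum_{n\geq1}\frac{p_n(x)p_n(y)}{n\ep_n}\big)$. On the other hand, setting $z=y_j$ in the generating function defining $q^\ep_n$ yields $\exp\big(\sum_{n\geq1}\frac{y_j^{\,n}}{n\ep_n}p_n(x)\big)=\sum_{k\geq0}q^\ep_k(x)\,y_j^{\,k}$; taking the product over all variables $y_j$ collapses the exponents to $\sum_n \frac{p_n(x)p_n(y)}{n\ep_n}$ on the left, so that $\prod_j\sum_k q^\ep_k(x)y_j^k=K(x,y)$. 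Expanding the product, sorting each exponent tuple $(k_1,k_2,\dots)$ into a partition $\la$ (using $q^\ep_0=1$), and recognizing $\sum y^{\,k_1}_1 y^{\,k_2}_2\cdots=m_\la(y)$ as the sum over distinct rearrangements identifies the left side with $\sum_\la q^\ep_\la(x)m_\la(y)$. This is the desired identity, and the criterion then gives $\langle m_\mu,q^\ep_\la\rangle=\delta_{\la\mu}$.

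The only delicate point is the passage through infinitely many variables: the rearrangement of the product $\prod_j(\cdots)$ and the factorization of $K$ must be read as identities of formal power series in $\Lambda(\ep)$, graded by total degree, where in each degree only finitely many terms contribute. I expect this bookkeeping, rather than any genuine difficulty, to be the main thing to handle carefully.
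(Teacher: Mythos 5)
Your proposal is correct and follows essentially the same route as the paper: your kernel $K(x,y)=\sum_\la \frac{p_\la(x)p_\la(y)}{z_\la\ep_\la}$ is exactly the paper's $F=\exp\big(\sum_{n\geq1}p'_n(x)p_n(y)\big)$, and you expand it in the same two ways (by multiplicities into the power-sum form, and via the generating function of the $q^\ep_n$ into $\sum_\la q^\ep_\la(x)m_\la(y)$), then conclude by the same degree-by-degree linear algebra that the paper invokes when passing from $\sum_\la a_{\la\rho}b_{\la\tau}=\delta_{\rho\tau}$ to the duality statement. The only difference is organizational: you isolate the ``dual bases $\iff$ kernel identity'' criterion as a separate step, which the paper carries out inline.
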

\begin{proof}
For a partition $\la$, denote $p'_\la(x)=\ep_\la^{-1}z_\la^{-1}p_\la(x)$, then $\langle p'_\la, p_\mu\rangle=\delta_{\la\mu}$.
Set $q_\la^\epsilon=\sum_\rho a_{\la\rho}p'_\rho$, $m_\mu=\sum_\rho b_{\mu\rho}p_\rho$, then we need to prove
\begin{equation}\label{E:qmrangle}
\langle q_\la^\epsilon, m_\mu\rangle=\sum_\rho a_{\la\rho}b_{\mu\rho}=\delta_{\la\mu}.
\end{equation}

 We denote $p'_n(x)=p'_{(n)}(x)$ and set $$F=\exp\Big(\sum_{n\geq1}p'_n(x)p_n(y)\Big).$$
On one hand we have
\begin{align*}
F&=\prod_{n\geq1}\sum_{i\geq0}\frac{1}{i!}p'_n(x)^ip_n(y)^i=\sum_\la p'_\la(x)p_\la(y).
\end{align*}
On the other hand we have
\begin{align*}
F&=\prod_{n\geq1}\exp\big(\sum_i p'_n(x)y_i^n\big)=\prod_{n\geq1}\sum_{i\geq0}q_n^\epsilon(x)y_i^n\\
&=\sum_\la q_\la^\epsilon(x)m_\la(y)=\sum_\la \sum_{\rho,\tau}a_{\la\rho}b_{\la\tau}p'_\rho(x)p_{\tau}(y)\\
&=\sum_{\rho,\tau}\sum_\la a_{\la\rho}b_{\la\tau}p'_\rho(x)p_{\tau}(y).
\end{align*}
Thus we have $\sum_{\la}a_{\la\rho}b_{\la\tau}=\delta_{\rho\tau}$, which is equivalent to equation (\ref{E:qmrangle}).
\end{proof}

We need some linear operators on $\Lambda(\ep)$.  Define $h_n\in \text{End}_{F^\ep}(\Lambda(\ep))$ by
\begin{align}
h_n.v&=n\epsilon_n\frac{\partial}{\partial p_{n}}v,\\
h_{-n}.v&=p_{n}v,\\
h_0.v&=v,
\end{align} where $n>0$ and $v\in \Lambda(\ep)$.
It can be verified that
\begin{equation}
  h_mh_n-h_nh_m=m\epsilon_m\delta_{m, -n}h_0.
  \end{equation}
Thus the Lie sub-algebra of $\text{End}_{F^\ep}(\Lambda(\ep))$ generated by the $h_n$'s is a Heisenberg algebra.

For an operator $A$ on $\Lambda(\ep)$, the conjugate $A^*$ of $A$ is defined by $\langle A.u,v\rangle=\langle u,A^*.v\rangle$ for all $u,v\in \Lambda(\ep)$. An operator $A$ is called {\it self-adjoint} if $A=A^*$. For a partition $\la$, if $A.q^\ep_\la$ is of the form $A.q^\ep_\la=\sum_{\mu\geq\la}a_{\la\mu}q^\ep_\la$, we say that $A$ {\it raises} $q^\ep_\la$.

The following result is immediate from definition.
\begin{lemma}\label{L:propertyofh_n}
We have following the properties for $h_n$'s:\\
(1) $h_{n}^*=h_{-n}$ for $n\in\mathbb{Z}$ ,\\
(2) $h_i.q^{\ep}_n=q^{\ep}_{n-i}$, for $n\in\mathbb{Z}, i>0$.
\end{lemma}

\section{A generalization of Newton's formula}

For non-negative integers $a_1,\cdots, a_s$, we can rearrange them in decreasing order to get a partition, denoted as $[a_1,\cdots,a_s]$.
For any partition $\la=(\la_1,\cdots,\la_s)$ with $s\geq2$ and $\la_s>0$, we define the partition $\la^+=[\la_1,\cdots,\la_{s-2},\la_{s-1}+1,\la_s-1]$.
Clearly $\la^+>\la$.
 %The following lemma can easily seen using Macdonald's rising operator on partitions. For completeness, we give a proof from the definition.
%\begin{lemma}\label{L:lambda+}
%For partition $\la=(\la_1,\cdots,\la_s,1)$, if partition $\mu\geq\la$ and $l(\mu)\leq s$, then $\mu\geq \la^+$.
%\end{lemma}
%\begin{proof}
%Set $\la^+=(\la_1,\cdots,\la_i,\la_{i+1}+1,\la_{i+2},\cdots,\la_s)$ where $\la_i>\la_{i+1}=\la_{i+2}=\cdots=\la_s=a$, and $\mu=(\mu_1,\cdots,\mu_i,\cdots,\mu_s)$. Set $|\nu|_k=\sum_{j\leq k}\nu_j$. First we have $|\mu|_j\geq |\la^+|_j$ for $j\leq i$.
%If there is a $t\geq1$ such that $|\mu|_{i+t}<|\la^+|_{i+t}$, then we should have $a\geq \mu_{i+t}$ (or, if $a\leq\mu_{i+t}-1$ then from $|\mu|_{i+t-1}\geq |\la|_{i+t-1}$, we have $|\mu|_{i+t}=|\mu|_{i+t-1}+\mu_{i+t}\geq|\la|_{i+t-1}+a+1=|\la^+|_{i+t}$.)  Thus we have $a\geq \mu_{i+t+l}$ for $l\geq1$, and then $|\mu|=|\mu|_{i+t}+\mu_{i+t+1}+\cdots<|\la^+|_{i+t}+a+\cdots=|\la^+|$, a contradiction.
%\end{proof}

The following fact on unions of partitions is well-known.
%\begin{lemma}\label{L:partitionunion}
For two partitions $\mu\geq\la$, and a partition $\nu$, we have $\mu\cup\nu\geq\la\cup\nu$.
%\end{lemma}

 For a commutative algebra $\mathcal{A}$ over a field $F$, let $R_n, q_n\in\mathcal{A}$ ($n\geq0$), and $q_0=1$ is the unit element of $\mathcal{A}$. For a partition $\la=(\la_1,\cdots,\la_s)$, define $q_\la=q_{\la_1}q_{\la_2}\cdots q_{\la_s}\in\mathcal{A}$. For convenience, we set $q_n=0$ for $n<0$.

\begin{theorem}\label{T:Newtongeneralization}

 Assume that $\sum_{i\geq1}R_iq_{n-i}=c_nq_n$ with $c_n\in F$  for each $n\geq1$. Then for any partition $\la=(\la_1,\cdots,\la_s)$ of length $s$, we have \begin{equation}
 \sum_{i_1,\cdots,i_s\geq1}R_{i_1+\cdots+i_s}q_{\la_1-i_1}\cdots q_{\la_s-i_s}=\sum_{\mu\geq\la}c_{\la\mu}q_\mu,
 \end{equation}
 where $c_{\la\mu}\in\sum_{i\geq0}\mathbb{Z}c_{\mu_i}$,
 and specifically $c_{\la\la}=(-1)^{l(\la)-1}c_{\la_s}$.
\end{theorem}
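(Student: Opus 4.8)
The plan is to argue by induction on the length $s=l(\la)$, peeling off the summation index attached to the smallest part $\la_s$ and reducing the shifted hypothesis back to the original one. Write the left-hand side as
\begin{equation*}
\Phi_\la:=\sum_{i_1,\dots,i_s\ge1}R_{i_1+\cdots+i_s}\,q_{\la_1-i_1}\cdots q_{\la_s-i_s}.
\end{equation*}
First note that $\Phi_\la$ is invariant under simultaneous permutation of the pairs $(\la_k,i_k)$, so it depends only on the multiset of parts and we may assume $\la$ is a partition. For $s=1$ the statement is exactly the hypothesis $\sum_{i\ge1}R_iq_{\la_1-i}=c_{\la_1}q_{\la_1}$, which gives $c_{\la\la}=c_{\la_1}=(-1)^{0}c_{\la_1}$, settling the base case.

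The engine of the induction is the shift identity, immediate from the hypothesis via $i\mapsto t+i$: for $t\ge0$ and any $n$,
\begin{equation*}
\sum_{i\ge1}R_{t+i}q_{n-i}=c_{n+t}q_{n+t}-\sum_{l=1}^{t}R_lq_{n+t-l}.
\end{equation*}
I would apply this after singling out $i_s$: for fixed $i_1,\dots,i_{s-1}$, writing $\sigma=i_1+\cdots+i_{s-1}$, the inner sum over $i_s$ is $\sum_{i\ge1}R_{\sigma+i}q_{\la_s-i}$, which by the shift identity (with $t=\sigma$, $n=\la_s$) equals $c_{\la_s+\sigma}q_{\la_s+\sigma}-\sum_{l=1}^{\sigma}R_lq_{\la_s+\sigma-l}$. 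Summing the product against $q_{\la_1-i_1}\cdots q_{\la_{s-1}-i_{s-1}}$ rewrites $\Phi_\la$ as a main term
\begin{equation*}
\mathrm{(I)}=\sum_{i_1,\dots,i_{s-1}\ge1}c_{\la_s+\sigma}\,q_{\la_s+\sigma}\,q_{\la_1-i_1}\cdots q_{\la_{s-1}-i_{s-1}}
\end{equation*}
minus a correction
\begin{equation*}
\mathrm{(II)}=\sum_{i_1,\dots,i_{s-1}\ge1}\ \sum_{l=1}^{\sigma}R_l\,q_{\la_s+\sigma-l}\,q_{\la_1-i_1}\cdots q_{\la_{s-1}-i_{s-1}}.
\end{equation*}
In (II) one re-pairs each surviving $R_l$ with one of the remaining $q$-factors and applies the hypothesis a second time. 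This is exactly the two-row computation of Lemma 3.8 in \cite{CJ2}: there, with $\la=(m,n)$, the procedure produces $\sum_{i\ge1}c_{n+i}q_{m-i}q_{n+i}-\sum_{i\ge0}c_{m-i}q_{m-i}q_{n+i}$, whose terms are indexed by the ``box-moving'' partitions $[m-i,n+i]$.

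The heart of the argument, and the step I expect to be hardest, is then twofold. \emph{(a) Triangularity}: one must show every $q_\mu$ that appears has $\mu\ge\la$. The mechanism is a reflection symmetry: in the two-row model the substitution $i\mapsto(m-n)-i$ fixes each product $q_{m-i}q_{n+i}$ while exchanging its two $c$-contributions, so every intermediate term with $[m-i,n+i]\not\ge(m,n)$ cancels and only the dominant ones survive. For general $s$ this must be promoted to an alternating cancellation over reflections of the shifted part-vector (a straightening of Jacobi--Trudi type); to keep the dominance bookkeeping rigorous one invokes the facts recorded before the theorem, namely $\la^+>\la$ and the monotonicity $\mu\ge\la\Rightarrow\mu\cup\nu\ge\la\cup\nu$, which let one compare the surviving $\mu$ with $\la$ one box-move at a time. \emph{(b) The diagonal}: collecting all contributions to $q_\la$, the pieces attached to the larger parts cancel in pairs — just as $-c_m$ (from $i=0$) and $+c_m$ (from $i=m-n$) cancel above, leaving the net $-c_n$ — so the smallest part survives with one extra sign, giving $c_{\la\la}=(-1)^{s-1}c_{\la_s}$. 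Finally, since every $c$-factor introduced by an application of the hypothesis is indexed by a number that becomes a part of the resulting $\mu$, and all multiplicities are integers, one reads off $c_{\la\mu}\in\sum_{i\ge0}\mathbb{Z}\,c_{\mu_i}$. The delicate point for $s\ge3$ — matching the residual $R_l$-terms of (II) across different tuples $(i_1,\dots,i_{s-1})$ so that all non-dominant $\mu$ disappear — is the main obstacle.
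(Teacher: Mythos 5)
Your set-up is correct as far as it goes: the shift identity $\sum_{i\geq1}R_{t+i}q_{n-i}=c_{n+t}q_{n+t}-\sum_{l=1}^{t}R_lq_{n+t-l}$ does follow from the hypothesis, and splitting $\Phi_\la$ into the main term (I) and the correction (II) is legitimate. But the proof stops exactly where the theorem begins: your points (a) and (b), which you yourself call ``the main obstacle,'' \emph{are} the content of the theorem, and nothing in the proposal establishes them. Moreover the difficulty is not just bookkeeping: your main term (I) is genuinely non-triangular on its own, so the cancellation against (II) is unavoidable and must be proved, not hoped for. Concretely, take $\la=(3,3,1)$. The tuple $(i_1,i_2)=(1,1)$ gives $\sigma=2$ and contributes $c_{3}\,q_{3}q_{2}q_{2}$ to (I), and $(3,2,2)\not\geq(3,3,1)$ (second partial sums $5<6$; in fact $(3,2,2)<(3,3,1)$). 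So a strictly \emph{smaller} $q_\mu$ appears in your main term and must be killed by (II) --- whose summands still contain $R_l$'s, require further recursive applications of the hypothesis, and have a range of $l$ depending on the tuple $(i_1,\dots,i_{s-1})$. The two-row reflection $i\mapsto(m-n)-i$ that you invoke has no evident analogue in this situation, and you give no mechanism for the general cancellation; ``a straightening of Jacobi--Trudi type'' is a name for the gap, not a proof.

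This is precisely the trap the paper's argument is engineered to avoid. Instead of expanding the inner sum into main-plus-correction, the paper proves an iteration lemma (Lemma \ref{L:iterationformula}) by the same kind of index shift, but organized so that every term on the right-hand side is either $T.q_{\la^+}$ with $\la^+>\la$, or a single factor $q_a$ (resp.\ $q_{\la_s}$) times $T.q_\nu$ with $\nu$ of length one less. A double induction --- outer on $l(\la)$, inner on the last part --- combined with the union monotonicity $\mu\geq\la\Rightarrow\mu\cup\nu\geq\la\cup\nu$ then yields triangularity \emph{term by term}, with no cancellation to analyze; and the diagonal coefficient $(-1)^{s-1}c_{\la_s}$ falls out because $q_\la$ can only arise from the single summand $-q_{\la_s}T.q_{(\la_1,\dots,\la_{s-1},a+1)}$, contributing one sign per length step. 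If you want to salvage your route, you would have to show that (II) reorganizes into complete $T.q_\nu$-blocks that absorb exactly the non-dominant part of (I) --- which in effect re-derives the paper's lemma --- so the honest fix is to restructure your decomposition along those lines rather than to push the reflection heuristic.
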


Iteratively using $\sum_{i\geq1}R_iq_{n-i}=c_nq_n$, we can write $R_n$ as linear combinations of $q_\la$'s
with $\la\vdash n$. Thus it is clear that $T.q_\la=\sum_{\mu\vdash|\la|}c_{\la\mu}q_\mu$.
We need to show that $q_\mu$ appears only when $\mu\geq\la$. Before we prove the theorem,
we give the following corollary of this theorem which directly generalizes Newton's identity.
\begin{corollary}\label{R:directgeneralizedNewton}
In the commutative algebra $\Lambda(\ep)$, assume that $\la=(\la_1,\cdots,\la_s)$ is a partition of length $s$. Then we have $$\sum_{i_1,\cdots,i_s\geq1}\epsilon_{i_1+\cdots+i_s}^{-1}p_{i_1+\cdots+i_s}q^\epsilon_{\la_1-i_1}\cdots q^\epsilon_{\la_s-i_s}=\sum_{\mu\geq\la} c_{\la\mu} q_\mu^\epsilon$$ with $c_{\la\la}=(-1)^{s-1}\la_s$.
\end{corollary}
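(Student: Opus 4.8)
The plan is to convert the hypothesis into an explicit expansion of the $R_n$ and then reduce the theorem to a combinatorial cancellation. The relation $\sum_{i\ge1}R_iq_{n-i}=c_nq_n$ says exactly that $R(w)Q(w)=C(w)$ for the generating series $R(w)=\sum_{m\ge1}R_mw^m$, $Q(w)=\sum_{n\ge0}q_nw^n$ and $C(w)=\sum_{n\ge1}c_nq_nw^n$. First I would invert $Q(w)$ as a geometric series $Q(w)^{-1}=\sum_{\ell\ge0}\big(1-Q(w)\big)^\ell$, with $1-Q(w)=-\sum_{n\ge1}q_nw^n$, so that $R_m=[w^m]C(w)Q(w)^{-1}$ is an explicit $\mathbb Z[c_1,c_2,\dots]$-combination of products $q_bq_{\gamma_1}\cdots q_{\gamma_\ell}$ carrying the sign $(-1)^\ell$. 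Substituting this into $\sum_{i_1,\dots,i_s\ge1}R_{i_1+\cdots+i_s}\prod_kq_{\la_k-i_k}$ presents the left-hand side as a finite, fully explicit alternating sum of monomials $q_\mu$. This already settles the coefficient-membership claim $c_{\la\mu}\in\sum_j\mathbb Z c_{\mu_j}$ and reduces the theorem to two assertions about that alternating sum: the coefficient of $q_\mu$ vanishes unless $\mu\ge\la$, and the coefficient of $q_\la$ equals $(-1)^{s-1}c_{\la_s}$.

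It is worth isolating the length-one case, both as the base and as a diagnostic. For $s=1$ the statement is the hypothesis, and more generally the shifted sums $S_k:=\sum_{i\ge1}R_{k+i}q_{n-i}$ are dominance-controlled \emph{term by term}: re-indexing and applying the hypothesis gives $S_k=c_{n+k}q_{n+k}-\sum_{i'=1}^kR_{i'}q_{n+k-i'}$, and since each $R_{i'}$ is supported on partitions $\ge(1^{i'})$, the union-monotonicity recorded before the theorem (if $\mu\ge\la$ and $\nu$ is a partition then $\mu\cup\nu\ge\la\cup\nu$) forces every surviving $q_\mu$ to satisfy $\mu\ge(n,1^k)$. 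The essential difficulty is that this termwise control collapses for $s\ge2$: once a shifted sum is multiplied by a \emph{separate} factor $q_{\la_s-i}$, the removed boxes are over-spread and individual monomials with $\mu\not\ge\la$ genuinely appear. For example, in $L_{(3,1)}$ one meets $q_{(2,2)}$ with $(2,2)<(3,1)$ from two different index vectors, and dominance is restored only by the cancellation between them; the same $q_{(2,2)}$ even survives in the ``principal part'' $[u^{|\la|}]\big(Q(u)^{s-1}C(u)\big)$ obtained by dropping the constraints $i_k\ge1$. Thus no naturally occurring partial sum is individually supported on $\{\mu\ge\la\}$.

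The heart of the argument, and the step I expect to be the main obstacle, is therefore a \emph{global} cancellation, and the plan is to realize it by a sign-reversing involution on the index data of the alternating sum. A monomial is indexed by a choice of $(i_1,\dots,i_s)$ together with the composition data $(b,\gamma_1,\dots,\gamma_\ell)$ produced by expanding $R_{i_1+\cdots+i_s}$, and its partition is the reordering of $\{\la_k-i_k\}_k\cup\{b,\gamma_1,\dots,\gamma_\ell\}$ with weight $(-1)^\ell c_b$. I would design an involution that pairs two configurations yielding the same $q_\mu$ with opposite signs whenever $\mu\not\ge\la$, most naturally by transferring a box between the ``removed'' data $\{\la_k-i_k\}$ and the ``inserted'' data $\{\gamma_t\}$, so that all off-dominant contributions cancel in pairs. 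The certification that an unpaired (fixed-point) configuration forces $\mu\ge\la$ would again rest on union-monotonicity together with the elementary raising $\la\mapsto\la^+=[\la_1,\dots,\la_{s-2},\la_{s-1}+1,\la_s-1]>\la$ introduced before the statement, which is the atomic dominance increase given by moving one box up a row. Constructing this involution and analyzing its fixed points is where the real work lies, precisely because the cancellation cannot be localized.

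Finally, the diagonal coefficient $c_{\la\la}$ is the value of the alternating sum at $\mu=\la$. Here the contributing configurations are rigid: to reproduce exactly the parts of $\la$ the lower rows must be emptied and the inserted data must rebuild the remaining parts, which I expect leaves a single chain of contributions in which each of the $s-1$ reductions supplies one factor $-1$ while the surviving scalar is $c_{\la_s}$. This gives $c_{\la\la}=(-1)^{s-1}c_{\la_s}$, and completes the proof once the cancellation established by the involution is in hand.
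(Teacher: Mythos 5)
Your proposal is a plan, not a proof: its central step is explicitly deferred, and you say so yourself. After the (correct) expansion $R_m=[w^m]\,C(w)Q(w)^{-1}=\sum_{\ell\ge0}(-1)^\ell\sum c_b\,q_bq_{\gamma_1}\cdots q_{\gamma_\ell}$, everything reduces to two claims about the resulting alternating sum: the coefficient of $q_\mu$ vanishes unless $\mu\ge\la$, and the coefficient of $q_\la$ is $(-1)^{s-1}c_{\la_s}$. For the first you only propose to ``design an involution'' whose construction and fixed-point analysis you admit is ``where the real work lies''; for the second you state what you ``expect''. Neither is carried out, and neither is routine: since the scalars $c_b$ are essentially arbitrary, the cancellation must happen separately for each label $b$, across several index vectors $(i_1,\dots,i_s)$ and several values of $\ell$ at once, with multiplicities. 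In your own example $\la=(3,1)$, besides $q_{(2,2)}$, the coefficient of $q_{(2,1,1)}$ vanishes as $-c_1-(c_1+c_2)+(2c_1+c_2)=0$, drawing on the three index vectors $(1,1),(2,1),(3,1)$ and on configurations with both $\ell=1$ and $\ell=2$; any sign-reversing involution must respect the label $b$ and cope with such multiplicities, and its fixed points must be shown to produce only $\mu\ge\la$ with the correct diagonal term. The paper resolves exactly this difficulty by a different organizing device, which your sketch lacks: the iteration formula of Lemma \ref{L:iterationformula}, expressing $T.q_{(\la_1,\dots,\la_s,a+1)}$ through $T.q_{\la^+}$ and terms of smaller length, followed by a double induction (on $l(\la)$, then on the last part) together with the union property of partitions. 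Without either that recursion or an actually constructed involution, the dominance statement --- which you correctly identify as the whole difficulty --- remains unproved.

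There is a second gap specific to this Corollary as stated. In $\Lambda(\ep)$ the data are concrete, $R_n=\ep_n^{-1}p_n$ and $c_n=n$, so the relation $\sum_{i\ge1}\ep_i^{-1}p_iq^\ep_{n-i}=nq^\ep_n$ is not a hypothesis you may assume (as you do when you write that for $s=1$ ``the statement is the hypothesis''); it is a claim that must be verified before your expansion of $R_m$, or Theorem \ref{T:Newtongeneralization}, can be invoked at all. That verification is in fact the entire content of the paper's proof: the operator $\sum_{i\ge1}\ep_i^{-1}h_{-i}h_i=\sum_{i\ge1}ip_i\frac{\partial}{\partial p_i}$ acts on $q^\ep_n$ both as $\sum_{i\ge1}\ep_i^{-1}p_iq^\ep_{n-i}$ (by Lemma \ref{L:propertyofh_n}) and as multiplication by $n$ (it is the degree operator on homogeneous elements); equivalently, apply $w\frac{d}{dw}$ to the generating series $\sum_n q^\ep_nw^n=\exp\big(\sum_{n\ge1}\frac{w^n}{n\ep_n}p_n\big)$. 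Once this one-line fact is in place, the Corollary follows at once from the already-established Theorem \ref{T:Newtongeneralization} with $\mathcal{A}=\Lambda(\ep)$, $q_n=q^\ep_n$, $c_n=n$, making a from-scratch cancellation argument unnecessary; as it stands, your proposal omits this verification and leaves the general cancellation unproved, so the statement is not established.
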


\begin{proof}
We only need to prove that $\sum_{i\geq1}\epsilon_i^{-1}p_iq^\epsilon_{n-i}=nq^\epsilon_n$ for each $n\geq1$, and then the corollary follows directly as a special case of Theorem \ref{T:Newtongeneralization}, where we set $\mathcal{A}=\Lambda(\ep)$, $R_n=\epsilon_n^{-1}p_n$, $q_n=q^\epsilon_n$ and $c_n=n$.

 Consider the action of the operator $A=\sum_{i\geq1}\ep_i^{-1}h_{-i}h_i=\sum_{i\geq1}ip_{i}\frac{\partial}{\partial p_i}$ acting on $q_n^\ep$. On one hand, Lemma \ref{L:propertyofh_n} implies $A.q_n^\ep=\sum_{i\geq1}\epsilon_i^{-1}p_iq^\epsilon_{n-i}$. On the other hand, writing $p_\la=p_1^{m_1}p_2^{m_2}\cdots$ with $m_i=m_i(\la)$, we find $A.p_{\la}=|\la|p_{\la}$; thus $A.q_n^\epsilon=nq_n^\epsilon$.
\end{proof}

\begin{remark}\label{R:generalizingNewton}
In the case of $l(\la)=s=1$ and $\epsilon=(\epsilon_1,\epsilon_2,\dots)=(1,1,\dots)$, $q^\ep_n$ is the Schur function $s_n$ and this gives (\ref{F:NewtonIdentity}), one form of Newton's formula.

In the case of $l(\la)=s=2$ and $\epsilon=(\epsilon_1,\epsilon_2,\dots)=(\al,\al,\dots)$, it gives the \emph{crucial} Lemma 3.8 in \cite{CJ2}, where we consider the following operator $$D(\alpha)=\sum_{i,j\geq 1}h_{-i}h_{-j}h_{i+j}+\sum_{i,j\geq
1}h_{-(i+j)}h_{i}h_{j}+(\alpha-1)\sum_{i\geq 1}ih_{-i}h_i.
$$

Let us show briefly how this operator diagonalizes Jack functions.  We only need to show that $D(\al)$ is self-adjoint and raises the $q^\ep_\la$'s. It is self-adjoint since $h_n^*=h_{-n}$.  From the expression $q^\ep_n=\sum_{\la\vdash n}\al^{-l(\la)}z_\la^{-1}p_\la$, it is not difficult to show that $D(\al).q^\ep_n$ is a scalar multiple of $q^\ep_n$. Then we write $D(\al)=A(\al)+B(\al)$ with $B(\al)=\sum_{i,j\geq
1}h_{-(i+j)}h_{i}h_{j}$. Note that $A(\al)$ is a derivation on $\Lambda(\ep)$ and $B(\al)$ is a second ordered differential operator. When acting on a product $q^\ep_\la=q^\ep_{\la_1}\cdots q^\ep_{\la_s}$ we have
\begin{align*}
&D(\al).q^\ep_\la=\sum_i q^\ep_{\la_1}\cdots(D(\al).q^\ep_{\la_i})\cdots q^\ep_{\la_s}\\
&\qquad\qquad +2\sum_{1\leq k<l\leq s}\sum_{i,j\geq1}h_{-(i+j)}q^\ep_{\la_1}\cdots (h_i.q^\ep_{\la_k})\cdots (h_j.q^\ep_{\la_l})\cdots q^\ep_{\la_s}.
\end{align*}
The first summand is a scalar multiple of $q^\ep_\la$ because $D(\al)$ diagonalizes the $q^\ep_n$'s. Note that $h_i.q^\ep_n=q^\ep_{n-i}$ and by the $s=2$ case of Corollary \ref{R:directgeneralizedNewton}, $\sum_{i,j\geq1}h_{-(i+j)}q^\ep_{\la_k-i}q^\ep_{\la_l-j}$ is of the form $\sum_{(s,t)\geq(\la_k,\la_l)}C_{s,t}q^\ep_{(s,t)}$. Now we see that for each pair $k<l$, the term in the second summand is of
the form  $\sum_{\mu\geq\la}C_{\la\mu}q^\ep_\mu$ by the union property of
partitions. %Lemma \ref{L:partitionunion}.
Thus $D(\al)$ raises $q^\ep_\la$'s.
\end{remark}

Now we turn to the proof of the theorem.
To simplify the notation, we denote
 \begin{equation}\label{F:defineT}
 T.q_\la=\sum_{i_1,\dots,i_s\geq 1}R_{i_1+\cdots+i_s}q_{\la_1-i_1}\cdots q_{\la_s-i_s},
 \end{equation}
 for a partition $\la=(\la_1,\dots,\la_s)$ of length $s\geq1$.
 Note that we do not allow $0$ parts for $\la$ in the notation $T.q_{\la}$. The reason is that the right side of (\ref{F:defineT}) vanishes if one of the $\la_i=0$.
If the $q_n$ for $n\geq1$ form a set of (algebraic) generators in $\mathcal{A}$, $T$ defines an operator on $\mathcal{A}$. This operator depends on the $R_n$'s.
The following lemma gives an iterative formula for the action of $T$.

\begin{lemma}\label{L:iterationformula}
 Let $(\la_1,\dots,\la_s,a+1)$ be a partition with $a\geq1$. Then we have
\begin{align}\label{F:iteration1}
T.q_{(\la_1,\dots,\la_s,1)}&=T.q_{[\la_1,\dots,\la_{s-1},\la_s+1]}-q_{\la_s}T.q_{(\la_1,\dots,\la_{s-1},1)},\\
\label{F:iteration}
T.q_{(\la_1,\dots,\la_s,a+1)}&=T.q_{[\la_1,\dots,\la_{s-1},\la_s+1,a]}\\\nonumber
                              &+q_aT.q_{[\la_1,\dots,\la_{s-1},\la_s+1]}-q_{\la_s}T.q_{(\la_1,\dots,\la_{s-1},a+1)}.
\end{align}
\end{lemma}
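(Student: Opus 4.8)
The plan is to prove both identities as \emph{purely formal} consequences of the definition (\ref{F:defineT}) of $T$; the Newton hypothesis $\sum_{i\geq1}R_iq_{n-i}=c_nq_n$ plays no role here, since the right-hand sides contain no $c_n$'s. The only facts I will use are $q_0=1$, $q_n=0$ for $n<0$, and the commutativity of $\mathcal A$. First I would record the symmetry of $T$: the defining sum (\ref{F:defineT}) is invariant under permuting the pairs $(\la_k,i_k)$, so $T.q_\mu$ depends only on the multiset of parts of $\mu$. Consequently each term $T.q_{[\cdots]}$ appearing on the right-hand sides may be evaluated with its parts listed in any convenient order, and I will always keep $\la_1,\dots,\la_{s-1}$ as the first parts (even when the bracket would sort them differently). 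This lets me factor out of every term the common inner sum $\sum_{i_1,\dots,i_{s-1}\geq1}q_{\la_1-i_1}\cdots q_{\la_{s-1}-i_{s-1}}$, after which it suffices to prove the resulting reduced identities for each fixed value of $m=i_1+\cdots+i_{s-1}\geq0$.

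Next I would introduce the single- and double-index building blocks
\[
U(m,b)=\sum_{i\geq1}R_{m+i}q_{b-i},\qquad V(m,b,d)=\sum_{i,j\geq1}R_{m+i+j}q_{b-i}q_{d-j}.
\]
With this notation, factoring out the common inner sum turns (\ref{F:iteration1}) into the reduced identity $V(m,\la_s,1)=U(m,\la_s+1)-q_{\la_s}U(m,1)$, and turns (\ref{F:iteration}) into
\[
V(m,\la_s,a+1)=V(m,\la_s+1,a)+q_aU(m,\la_s+1)-q_{\la_s}U(m,a+1).
\]
The $q_a$- and $q_{\la_s}$-prefactors in the statements arise precisely from boundary contributions $q_0=1$ occurring when one of the appended summation indices equals $1$.

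The engine for both reduced identities is the one-step splitting $U(m,b+1)=R_{m+1}q_b+U(m+1,b)$, obtained by isolating the $i=1$ term and relabeling $i\mapsto i+1$. For (\ref{F:iteration1}) I isolate $j=1$ in $V(m,\la_s,1)$ (all larger $j$ vanish because $q_{1-j}=0$), giving $V(m,\la_s,1)=U(m+1,\la_s)$, while the right side reduces to
\[
U(m,\la_s+1)-q_{\la_s}U(m,1)=\bigl(R_{m+1}q_{\la_s}+U(m+1,\la_s)\bigr)-q_{\la_s}R_{m+1}=U(m+1,\la_s),
\]
where the two $R_{m+1}$-terms cancel by commutativity. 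For (\ref{F:iteration}) I reduce each of the four terms, by splitting off its smallest summation index, to expressions in $U(m+1,\cdot)$, $V(m+1,\cdot,\cdot)$ together with the two cross terms $q_aR_{m+1}q_{\la_s}$ and $q_{\la_s}R_{m+1}q_a$; both sides then collapse to $V(m+1,\la_s,a)+q_aU(m+1,\la_s)$ once these cross terms cancel.

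I expect the only real obstacle to be bookkeeping: keeping the shifts $m\mapsto m+1$ consistent across all terms, and matching each boundary contribution (the $q_0$ terms and the vanishing $q_{<0}$ terms) to the correct prefactor. The conceptual content is minimal — the crucial and only genuinely nontrivial point is that the two cross terms $q_aR_{m+1}q_{\la_s}$ and $q_{\la_s}R_{m+1}q_a$ cancel, which uses commutativity of $\mathcal A$ and is exactly what makes each statement an honest identity rather than merely a congruence modulo lower-order terms.
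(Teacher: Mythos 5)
Your proposal is correct; I checked the factoring argument and both reduced identities, and they hold. Your proof of (\ref{F:iteration1}) is in essence the paper's own computation: the paper likewise reindexes the last summation variable in $T.q_{[\la_1,\dots,\la_{s-1},\la_s+1]}$, splits off the boundary term where that index is $0$, uses $q_0=1$, and reinserts a dummy index $j$ against $q_{1-j}$; your $U,V$ notation with the prefix sum factored out at fixed $m=i_1+\cdots+i_{s-1}$ is the same manipulation with cleaner bookkeeping. Where you genuinely diverge is (\ref{F:iteration}): you prove it by a second direct index computation, collapsing both sides to $V(m+1,\la_s,a)+q_aU(m+1,\la_s)$ after the cross terms cancel, whereas the paper never touches the indices again. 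Instead it applies the (rearranged) identity (\ref{F:iteration1}) twice --- once viewing $a+1$ as the incremented part, and once viewing $\la_s+1$ as the incremented part among $\la_1,\dots,\la_{s-1},a,\la_s+1$ --- producing two expressions that share the auxiliary three-part term $T.q_{(\la_1,\dots,\la_s,a,1)}$, which is then eliminated by subtraction. Both routes need the symmetry of $T$ under permutation of parts: you state it explicitly at the outset, while the paper uses it tacitly through the bracket notation $[\cdots]$. Your version is self-contained and makes visible exactly where commutativity enters (the cancellation $q_aR_{m+1}q_{\la_s}=q_{\la_s}R_{m+1}q_a$); the paper's version buys the second, more complicated identity ``for free'' from the first by pure symbol algebra, which is slicker but hides where $q_0=1$ and $q_{n}=0$ for $n<0$ actually do their work.
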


\begin{proof}
The first formula comes directly from the following computations:
\begin{align*}
&T.q_{[\la_1,\dots,\la_{s-1},\la_s+1]}\\
&=\sum_{i_1,\dots,i_s\geq 1}R_{i_1+\cdots+i_s}q_{\la_1-i_1}\cdots q_{\la_{s-1}-i_{s-1}} q_{\la_s+1-i_s}\\
&=\sum_{i_1,\dots,i_{s-1}\geq1,i_s\geq 0}R_{i_1+\cdots+i_s+1}q_{\la_1-i_1}\cdots q_{\la_{s-1}-i_{s-1}}q_{\la_s-i_s}\\
&=\sum_{i_1,\dots,i_{s-1},i_s\geq 1}R_{i_1+\cdots+i_s+1}q_{\la_1-i_1}\cdots q_{\la_s-i_s}\\
&\qquad+\sum_{i_1,\dots,i_{s-1}\geq1}R_{i_1+\cdots+i_{s-1}+1}q_{\la_1-i_1}\cdots q_{\la_{s-1}-i_{s-1}}q_{\la_s}\\
&=\sum_{i_1,\dots,i_{s-1},i_s,j\geq 1}R_{i_1+\cdots+i_s+j}q_{\la_1-i_1}\cdots q_{\la_s-i_s}q_{1-j}\\
&\qquad+q_{\la_s}\sum_{i_1,\dots,i_{s-1},j\geq1}R_{i_1+\cdots+i_{s-1}+j}q_{\la_1-i_1}\cdots q_{\la_{s-1}-i_{s-1}}q_{1-j}\\
&=T.q_{(\la_1,\dots,\la_s,1)}+q_{\la_s}T.q_{(\la_1,\dots,\la_{s-1},1)}.
\end{align*}
Note that for the second last equal sign we use the given condition $q_0=1$.
Directly using this result and then formula (\ref{F:iteration1}), we have
\begin{align*}
T.q_{(\la_1,\dots,\la_s,a+1)}&=T.q_{(\la_1,\dots,\la_{s},a,1)}+q_{a}T.q_{(\la_1,\dots,\la_{s},1)}\\
                              &=T.q_{(\la_1,\dots,\la_{s},a,1)}+q_{a}(T.q_{[\la_1,\dots,\la_{s-1},\la_s+1]}-q_{\la_s}T.q_{(\la_1,\dots,\la_{s-1},1)}).
\end{align*}
Similarly, we have
\begin{align*}
T.q_{[\la_1,\dots,\la_s+1,a]}=T.q_{(\la_1,\dots,\la_{s},a,1)}+q_{\la_s}(T.q_{(\la_1,\dots,\la_{s-1},a+1)}-q_aT.q_{(\la_1,\dots,\la_{s-1},1)}).
\end{align*}
Combining these two formulae, we find formula (\ref{F:iteration}).
\end{proof}

Now we can prove the theorem by using another induction inside the first induction.
\begin{proof}
We use induction on $l(\la)$. The statement is true for $l(\la)=1$. Assume the statement holds for
 any partition $\la$ such that $l(\la)\leq s$. We prove the case of $l(\la)=s+1$ by induction on $\la_{s+1}$.

 We first consider $\la=(\la_1,\dots,\la_s,1)$, thus $\la^+=[\la_1,\dots,\la_{s-1},\la_s+1]>\la$. By formula (\ref{F:iteration1}), we have $T.q_{\la}=T.q_{\la^+}-q_{\la_s}T.q_{(\la_1,\dots,\la_{s-1},1)}$, and the theorem is true for this $\la$ by the induction hypothesis and the
 union property of partitions.

Now we assume that the theorem is true for $\la$ with $l(\la)=s+1$ and $\la_{s+1}=a$ ($a\geq1$). For the partition $\la=(\la_1,\dots,\la_s,a+1)$, we have $\la^+=[\la_1,\dots,\la_{s-1},\la_s+1,a]$, and formula (\ref{F:iteration}) becomes:
$$T.q_{\la}=T.q_{\la^+}+q_aT.q_{[\la_1,\dots,\la_{s-1},\la_s+1]}-q_{\la_s}T.q_{(\la_1,\dots,\la_{s-1},a+1)}.$$
Note that $[\la_1,\dots,\la_{s-1},\la_s+1]\cup(a)=\la^+>\la$, and the term $q_\la$ appears only in $-q_{\la_s}T.q_{(\la_1,\dots,\la_{s-1},a+1)}$. So, by the induction hypothesis and the union property of partitions, the theorem is true in this case.
\end{proof}

\begin{lemma}\label{L:tworowaction}
Under the assumption of  Theorem \ref{T:Newtongeneralization}, for $\la=(m,n)$ of length $2$ we have
$$T.q_{\la}=\sum_{(i,j)>\la}(c_i-c_j)q_{(i,j)}-c_nq_{(m,n)},$$
where we set $c_0=0$ for convenience.
\end{lemma}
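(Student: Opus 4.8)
The plan is to reduce everything to the hypothesis $\sum_{i\ge1}R_iq_{a-i}=c_aq_a$ (valid for $a\ge1$, and also for $a=0$ once we adopt the convention $c_0=0$, since then both sides vanish), applying it twice with careful bookkeeping of the index ranges and the conventions $q_0=1$, $q_{n}=0$ for $n<0$.

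First I would expand
$$T.q_{(m,n)}=\sum_{i\ge1}\Big(\sum_{j\ge1}R_{i+j}q_{n-j}\Big)q_{m-i},$$
and treat the inner sum for fixed $i$. Reindexing by $k=i+j$ turns it into $\sum_{k\ge i+1}R_kq_{(n+i)-k}$, which is the tail of the one-row identity at $a=n+i$. Subtracting the head gives
$$\sum_{j\ge1}R_{i+j}q_{n-j}=c_{n+i}q_{n+i}-\sum_{k=1}^{i}R_kq_{n+i-k}.$$
Multiplying by $q_{m-i}$ and summing over $i$ yields $T.q_{(m,n)}=S_1-S_2$, where $S_1=\sum_{i\ge1}c_{n+i}q_{m-i}q_{n+i}$ and $S_2=\sum_{i\ge1}\sum_{k=1}^{i}R_kq_{m-i}q_{n+i-k}$.

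Next I would simplify each piece. For $S_1$, the substitution $p=n+i$ gives directly $S_1=\sum_{p=n+1}^{m+n}c_pq_pq_{m+n-p}$. For $S_2$, I would set $l=i-k\ge0$ and swap the order of summation, so that $S_2=\sum_{l\ge0}q_{n+l}\sum_{k\ge1}R_kq_{(m-l)-k}$; applying the one-row identity a \emph{second} time (now at $a=m-l$) collapses the inner sum to $c_{m-l}q_{m-l}$, and the substitution $p=m-l$ gives $S_2=\sum_{p=1}^{m}c_pq_pq_{m+n-p}$. Here the convention $c_0=0$ cleanly disposes of the boundary terms at $p=0$.

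Finally I would subtract. Since $\la=(m,n)$ is a partition we have $m\ge n$, so $S_1$ ranges over $p\in[n+1,m+n]$ and $S_2$ over $p\in[1,m]$; the coefficients agree and cancel on the overlap $[n+1,m]$, leaving
$$T.q_{(m,n)}=\sum_{p=m+1}^{m+n}c_pq_pq_{m+n-p}-\sum_{p=1}^{n}c_pq_pq_{m+n-p}.$$
It then remains to recognize this as the asserted form: the pairs $(p,m+n-p)$ with $p\ge m+1$ are exactly the partitions $(i,j)>(m,n)$, contributing the $c_i$ part of $c_i-c_j$, while reindexing the subtracted sum by $p\mapsto m+n-p$ produces the $-c_j$ part over the same range together with the separated term $-c_nq_{(m,n)}$ (the $p=n$ term). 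I expect the only real work to be this last matching and the attendant care with the summation limits and with the $c_0=0$, $q_0=1$, $q_{n}=0$ conventions; the two applications of the one-row identity are otherwise routine.
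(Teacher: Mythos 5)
Your proof is correct, but it takes a genuinely different route from the paper. The paper proves this lemma by induction on $n$: the base case $n=1$ follows from formula (\ref{F:iteration1}), and the inductive step applies the three-term recursion (\ref{F:iteration}) of Lemma \ref{L:iterationformula} to $\la=(m,a+1)$, combining the induction hypothesis with the known action on $(m+1,a)$, $(m,1)$ and $(a,1)$. You instead compute $T.q_{(m,n)}$ in closed form in one pass: splitting off the head of the one-row identity at $a=n+i$, swapping summation order, and applying the one-row identity a second time at $a=m-l$ yields $S_1-S_2=\sum_{p=m+1}^{m+n}c_pq_pq_{m+n-p}-\sum_{p=1}^{n}c_pq_pq_{m+n-p}$ after the overlap $[n+1,m]$ cancels, and the final reindexing (with $c_0=0$ absorbing the $(m+n,0)$ boundary term) gives exactly the stated formula. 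I checked the bookkeeping — the reindexings $k=i+j$ and $l=i-k$ are bijective on the stated ranges, all sums are finite since $q_{m-i}=0$ for $i>m$, the extension of the hypothesis to $a\le 0$ is legitimate under the conventions, and the dominance order for equal-weight two-row partitions is indeed $i>m$ — so the argument is complete. What your approach buys is independence from Lemma \ref{L:iterationformula} and from induction altogether: your manipulations are in effect the same telescoping reindexings that prove that lemma, but organized globally rather than recursively, which makes the origin of the $c_i-c_j$ pattern and of the $c_0=0$ convention more transparent; what the paper's approach buys is brevity, since Lemma \ref{L:iterationformula} is already available because it underpins the proof of Theorem \ref{T:Newtongeneralization}.
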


\begin{proof}
We prove it by induction on $n$ with the help of Lemma \ref{L:iterationformula}. For $n=1$ we have
\begin{align*}
T.q_\la&=T.q_{n+1}-q_nT.q_1=c_{n+1}q_{n+1}-c_1q_{(n,1)}
\end{align*}
and thus it is true in this case. Assume it is true for $n=a$ ($a\geq1$). Now for $\la=(m,a+1)$ we have
\begin{align*}
T.q_\la&=T.q_{m+1,a}+q_aT.q_{m,1}-q_mT.q_{a,1}\\
       &=\sum_{(i,j)>(m+1,a)}(c_i-c_j)q_{(i,j)}-c_aq_{(m+1,a)}+q_a(c_{m+1}q_{m+1}-c_1q_{(m,1)})\\
       &\qquad\qquad\qquad\qquad\qquad\qquad\qquad\qquad\quad-q_m(c_{a+1}q_{a+1}-c_1q_{(a,1)})\\
       &=\sum_{(i,j)>(m+1,a)}(c_i-c_j)q_{(i,j)}+(c_{m+1}-c_a)q_{(m+1,a)}-c_{a+1}q_{(m,a+1)}\\
       &=\sum_{(i,j)>(m,a+1)}(c_i-c_j)q_{(i,j)}-c_{a+1}q_{(m,a+1)}.
\end{align*}
\end{proof}
 \section{Macdonald symmetric functions}

For two parametric vectors $\eta=(\eta_1,\eta_2,\dots), \tau=(\tau_1,\tau_2,\dots)$ we define the following vertex operator $$X^{\eta\tau}(z)=\exp\Big(\sum_{n\geq1}
\frac{z^{n}}{n}\eta_n h_{-n}\Big)\exp\Big(\sum_{n\geq1}
\frac{z^{-n}}{n}\tau_n h_n\Big)=\sum_n X_{n}^{\eta\tau} z^{-n},$$
which maps $\Lambda(\ep)$ to $\Lambda(\ep)[[z]]$, the set of formal power series of $z$ in $\Lambda(\ep)$.
We want to find the \emph{right} choice of $\epsilon_n, \eta_n$ and $\tau_n$, such that $X_0^{\eta\tau}$ is self-adjoint and  acts on $q^\epsilon_\la$ triangularly.

 For a partition $\mu=(\mu_1,\dots,\mu_s)$, define the operators $h_\mu=h_{\mu_1}\cdots h_{\mu_s}$ and $h_{-\mu}=h_{-\mu_1}\cdots h_{-\mu_s}$. For any series of constants $c=(c_{\mu\nu})_{|\mu|=|\nu|}$, $T_c=\sum_{|\mu|=|\nu|}c_{\mu\nu}h_{-\mu} h_{\nu}$ defines a linear operator on $\Lambda(\ep)$.
 The following lemma can easily be proved.

 \begin{lemma} \label{L:zerooperator} The operator $T_c=0$ if and only if $c=0$, i.e. $T_c.v=0$ for every $v\in \Lambda(\ep)$ if and only if $c_{\mu\nu}=0$ for each pair $(\mu,\nu)$ with $|\mu|=|\nu|$.
 \end{lemma}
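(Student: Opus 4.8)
The plan is to prove the nontrivial direction---that $T_c.v=0$ for all $v\in\Lambda(\ep)$ forces $c=0$---by exploiting the explicit description of the building blocks. Here $h_{-\mu}$ acts as multiplication by $p_\mu$, while $h_\nu=h_{\nu_1}\cdots h_{\nu_s}$ acts as the constant-coefficient differential operator $\prod_n (n\ep_n\,\partial/\partial p_n)^{m_n(\nu)}$. Since each summand $h_{-\mu}h_\nu$ changes degree by $|\mu|-|\nu|=0$, the operator $T_c$ preserves homogeneous components, and because the $p_\rho$ span $\Lambda(\ep)$ it suffices to evaluate $T_c$ on power sums $p_\rho$ and read off coefficients. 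The ``if'' direction is of course immediate.

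First I would record the one computation that drives everything: for partitions $\nu,\rho$ one has $h_\nu.p_\rho=0$ unless $\nu\subseteq\rho$ as multisets, in which case $h_\nu.p_\rho=\left(\prod_n (n\ep_n)^{m_n(\nu)}\,m_n(\rho)!/(m_n(\rho)-m_n(\nu))!\right)p_{\rho\setminus\nu}$ with a \emph{nonzero} scalar; in particular $h_\nu.p_\nu=z_\nu\ep_\nu\neq0$. Applying $T_c$ to a fixed power sum $p_{\nu^0}$ and collecting terms, the pairs with $\nu=\nu^0$ contribute $z_{\nu^0}\ep_{\nu^0}\sum_{|\mu|=|\nu^0|}c_{\mu\nu^0}\,p_\mu$, while every other nonvanishing pair has $\nu\subsetneq\nu^0$ and hence contributes a multiple of $p_{\mu\cup(\nu^0\setminus\nu)}$ carrying a coefficient $c_{\mu\nu}$ with $|\mu|=|\nu|<|\nu^0|$.

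This dichotomy sets up an induction on the common weight $N=|\mu|=|\nu|$ of the index pairs. For $N=0$ the only pair is $(\emptyset,\emptyset)$ and $T_c.1=c_{\emptyset\emptyset}$, so $c_{\emptyset\emptyset}=0$. Assuming all $c_{\mu\nu}$ with $|\mu|=|\nu|<N$ already vanish, the ``lower'' contributions above disappear, so for each $\nu^0\vdash N$ the hypothesis $T_c.p_{\nu^0}=0$ collapses to $z_{\nu^0}\ep_{\nu^0}\sum_{|\mu|=N}c_{\mu\nu^0}\,p_\mu=0$. Linear independence of the $p_\mu$ together with $z_{\nu^0}\ep_{\nu^0}\neq0$ yields $c_{\mu\nu^0}=0$ for all $\mu\vdash N$, and letting $\nu^0$ range over all partitions of $N$ closes the inductive step.

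The only delicate point is the bookkeeping in the second paragraph: one must verify that every cross term $h_{-\mu}h_\nu.p_{\nu^0}$ with $\nu\subsetneq\nu^0$ genuinely involves only coefficients $c_{\mu\nu}$ of strictly smaller weight, so that the induction hypothesis wipes them out and the diagonal coefficient $c_{\mu\nu^0}$ is isolated. This is precisely the Weyl-algebra linear-independence phenomenon---the products of a power-sum multiplication $p_\mu$ with a differential monomial in the $\partial/\partial p_n$ are independent as operators---and the weight grading is exactly what makes the resulting system triangular. I expect this verification, rather than any genuine difficulty, to be the main thing to get right.
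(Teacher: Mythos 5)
The paper never actually proves this lemma---it is stated with only the remark that it ``can easily be proved''---so there is no official argument to compare yours against; the relevant question is whether your proof is correct, and it is. Your key computation is right: writing $h_\nu=\prod_n (n\ep_n)^{m_n(\nu)}(\partial/\partial p_n)^{m_n(\nu)}$, one gets $h_\nu.p_\rho=0$ unless $m_n(\nu)\le m_n(\rho)$ for all $n$, and $h_\nu.p_\nu=z_\nu\ep_\nu\neq 0$ (here the hypothesis that the $\ep_n$ are nonzero is used). Hence in $T_c.p_{\nu^0}$ the pairs with $\nu=\nu^0$ contribute $z_{\nu^0}\ep_{\nu^0}\sum_{|\mu|=|\nu^0|}c_{\mu\nu^0}p_\mu$, while every other surviving pair carries a coefficient $c_{\mu\nu}$ of strictly smaller weight $|\nu|<|\nu^0|$; your induction on the weight $N$ kills those cross terms and then linear independence of the $p_\mu$ gives $c_{\mu\nu^0}=0$ for all $\mu\vdash N$. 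Two fine points are worth making explicit, though neither is a gap. First, on any fixed $p_{\nu^0}$ only finitely many summands of $T_c$ act nontrivially (only the finitely many $\nu\subseteq\nu^0$ survive, each with finitely many partners $\mu$ of equal weight), which is what makes the evaluation a legitimate finite sum. Second, the cross terms $p_{\mu\cup(\nu^0\setminus\nu)}$ have the \emph{same} total degree $N$ as the diagonal terms, since $T_c$ is degree-preserving, so one cannot separate them by degree alone; your write-up correctly relies on the inductive hypothesis (vanishing of all lower-weight coefficients) rather than on a grading argument, and that is exactly what makes the system triangular in the weight.
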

The following lemma gives a necessary and sufficient condition for $X_0^{\eta\tau}$ to be self-adjoint.
\begin{lemma}\label{L:conjugate}
Assume  $\eta_1\tau_1\neq0$. We have that $X_0^{\eta\tau}$ is self-adjoint, i.e. $X_0^{\eta\tau}=(X_0^{\eta\tau})^*$  if and only if there is an nonzero constant $c$ such that $\eta_n=\tau_n c^n$.
\end{lemma}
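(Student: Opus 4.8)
The plan is to reduce the self-adjointness of $X_0^{\eta\tau}$ to an explicit scalar condition on its coefficients in the spanning set $\{h_{-\mu}h_\nu\}$, and then to extract the constraint on the two parameter vectors using Lemma \ref{L:zerooperator}. The whole argument is an ``if and only if'' whose two directions both come down to elementary geometric relations among the $\eta_n$ and $\tau_n$.

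First I would expand the two exponentials in the definition of $X^{\eta\tau}(z)$. Since the $h_{-n}$ mutually commute (they are all multiplications), and likewise the $h_n$ mutually commute, each exponential factors as a product over $n$ and expands into a sum over partitions; collecting the coefficient of $z^0$ (equivalently, imposing $|\mu|=|\nu|$ in the bidegree $z^{|\mu|-|\nu|}$) gives
\[
X_0^{\eta\tau}=\sum_{|\mu|=|\nu|}\frac{\eta_\mu\tau_\nu}{z_\mu z_\nu}\,h_{-\mu}h_\nu,
\]
where $\eta_\mu=\prod_n\eta_n^{m_n(\mu)}$ and $\tau_\nu=\prod_n\tau_n^{m_n(\nu)}$. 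This is precisely an operator of the form $T_c$ with $c_{\mu\nu}=\eta_\mu\tau_\nu/(z_\mu z_\nu)$.

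Next I would compute the adjoint. Using $h_n^*=h_{-n}$ from Lemma \ref{L:propertyofh_n} together with $(AB)^*=B^*A^*$, each basis term transforms as $(h_{-\mu}h_\nu)^*=h_{-\nu}h_\mu$; relabelling $\mu\leftrightarrow\nu$ then shows
\[
(X_0^{\eta\tau})^*=\sum_{|\mu|=|\nu|}\frac{\tau_\mu\eta_\nu}{z_\mu z_\nu}\,h_{-\mu}h_\nu=X_0^{\tau\eta}.
\]
Hence self-adjointness is equivalent to $X_0^{\eta\tau}=X_0^{\tau\eta}$, and since both sides are now written in the same spanning set $\{h_{-\mu}h_\nu\}$, Lemma \ref{L:zerooperator} converts this operator identity into the scalar system $\eta_\mu\tau_\nu=\tau_\mu\eta_\nu$ for every pair with $|\mu|=|\nu|$.

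Finally I would show this system is equivalent to the stated condition. For the forward direction, specializing to $\mu=(n)$ and $\nu=(1^n)$ gives $\eta_n\tau_1^n=\tau_n\eta_1^n$; since $\eta_1\tau_1\neq0$ I may set $c=\eta_1/\tau_1\neq0$ and divide by $\tau_1^n$ to get $\eta_n=\tau_n c^n$ (valid even if $\tau_n=0$). Conversely, if $\eta_n=\tau_n c^n$ then $\eta_\mu=c^{|\mu|}\tau_\mu$ for every partition $\mu$, so $\eta_\mu\tau_\nu=c^{|\mu|}\tau_\mu\tau_\nu=c^{|\nu|}\tau_\mu\tau_\nu=\tau_\mu\eta_\nu$ whenever $|\mu|=|\nu|$, recovering the full system. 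The one genuinely delicate point is the adjoint bookkeeping in the third step: the order reversal under $(\cdot)^*$ combined with the index swap must be tracked so that $(X_0^{\eta\tau})^*$ lands back in the very same $h_{-\mu}h_\nu$ spanning set as $X_0^{\tau\eta}$, which is what makes Lemma \ref{L:zerooperator} directly applicable; once that is secured, everything else is routine manipulation of the geometric relations $\eta_n=\tau_n c^n$.
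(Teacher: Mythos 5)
Your proof is correct and follows essentially the same route as the paper's: the same expansion $X_0^{\eta\tau}=\sum_{|\mu|=|\nu|}z_\mu^{-1}z_\nu^{-1}\eta_\mu\tau_\nu h_{-\mu}h_\nu$, the identity $(X_0^{\eta\tau})^*=X_0^{\tau\eta}$, the reduction via Lemma \ref{L:zerooperator} to $\eta_\mu\tau_\nu=\eta_\nu\tau_\mu$, the specialization $\mu=(n)$, $\nu=(1^n)$ for necessity, and the observation $\eta_\la=\tau_\la c^{|\la|}$ for sufficiency. You merely spell out the adjoint bookkeeping that the paper leaves as ``it can be shown,'' which is a fine addition but not a different argument.
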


\begin{proof}
It can be shown that $$X_0^{\eta\tau}=\sum_{|\mu|=|\nu|}z_\mu^{-1}\eta_\mu h_{-\mu} z_\nu^{-1}\tau_\nu h_{\nu},$$
where $\eta_\mu=\eta_{\mu_1}\cdots\eta_{\mu_s}$ for a partition $\mu=(\mu_1,\dots,\mu_s)$, and similarly for $\tau_\nu$.
On the other hand we have $(X_0^{\eta\tau})^*=X_0^{\tau\eta}$. By Lemma \ref{L:zerooperator}, $X_0^{\eta\tau}$ is self-adjoint if and only if
$\eta_\mu \tau_\nu=\eta_\nu \tau_\mu$ for each pair $(\mu,\nu)$ with $|\mu|=|\nu|$. Specifically, one needs $\eta_n \tau_1^n=\eta_1^n \tau_n$ for $n\geq1$, and thus $\eta_n=(\eta_1/\tau_1)^n \tau_n$. Note that $\eta_n=\tau_nc^n$ implies $\eta_\la=\tau_\la c^{|\la|}$; the sufficiency follows.
\end{proof}
\begin{lemma}\label{L:creatingpart}
Set $\exp\Big(\sum_{n\geq1}
\frac{z^{n}}{n}\eta_n p_{n}\Big)=\sum_{n\geq0}R_nz^n$. If $\eta_n=\ep_n^{-1}(b^n-1)$, then we have $$\sum_{i\geq1}R_iq^\ep_{n-i}=(b^n-1)q_n^\ep.$$
\end{lemma}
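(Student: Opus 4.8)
The plan is to prove the identity at the level of generating functions in $z$, recasting the convolution $\sum_{i\geq1}R_iq^\ep_{n-i}$ as the extraction of a single coefficient from a product of two exponentials. First I would form the product
$$\Big(\sum_{i\geq0}R_iz^i\Big)\Big(\sum_{j\geq0}q^\ep_jz^j\Big),$$
whose coefficient of $z^n$ is $\sum_{i=0}^nR_iq^\ep_{n-i}$. Since $R_0$ is the constant term of the exponential defining the $R_n$, we have $R_0=1$; likewise $q^\ep_0=1$. Hence this coefficient equals $q^\ep_n+\sum_{i\geq1}R_iq^\ep_{n-i}$, so the sum we want is exactly the $z^n$-coefficient of the product minus $q^\ep_n$.

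Next I would evaluate the product in closed form. Using the two defining generating functions
$$\sum_{i\geq0}R_iz^i=\exp\Big(\sum_{n\geq1}\frac{z^n}{n}\eta_np_n\Big),\qquad \sum_{j\geq0}q^\ep_jz^j=\exp\Big(\sum_{n\geq1}\frac{z^n}{n\ep_n}p_n\Big),$$
and the fact that the two arguments are formal series in the commuting variables $p_n$, the product is $\exp\Big(\sum_{n\geq1}\frac{z^n}{n}(\eta_n+\ep_n^{-1})p_n\Big)$. Substituting $\eta_n=\ep_n^{-1}(b^n-1)$ collapses the coefficient to $\eta_n+\ep_n^{-1}=\ep_n^{-1}b^n$, so the combined exponent becomes $\sum_{n\geq1}\frac{(bz)^n}{n\ep_n}p_n$. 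By the very definition of the $q^\ep_n$, its exponential equals $\sum_{m\geq0}q^\ep_m(bz)^m=\sum_{m\geq0}b^mq^\ep_mz^m$.

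Finally, reading off the coefficient of $z^n$ gives $b^nq^\ep_n$ for the product, and subtracting $q^\ep_n$ as isolated in the first step yields $\sum_{i\geq1}R_iq^\ep_{n-i}=(b^n-1)q^\ep_n$, as asserted. I do not expect a genuine obstacle here: once the convolution is reinterpreted as a coefficient, the argument reduces to the elementary rule that multiplying exponentials adds their arguments. The only point demanding a little care is the bookkeeping at $i=0$, namely separating off the term $R_0q^\ep_n=q^\ep_n$ so that the remaining sum over $i\geq1$ is correctly matched; everything else is a direct specialization of $\eta_n$.
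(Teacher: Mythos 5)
Your proof is correct and follows essentially the same route as the paper: both multiply the generating function of the $R_i$ by that of the $q^\ep_j$, combine the exponentials so that the hypothesis $\eta_n=\ep_n^{-1}(b^n-1)$ collapses the exponent to $\sum_{n\geq1}\frac{(bz)^n}{n\ep_n}p_n$, and read off that the full convolution equals $b^nq^\ep_n$, from which the $i\geq1$ sum is $(b^n-1)q^\ep_n$. The only cosmetic difference is that you isolate the $i=0$ term explicitly at the start, while the paper leaves that final bookkeeping implicit.
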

\begin{proof} We compute that
%\noindent{Proof.}
\begin{align*}
&\sum_{n\geq0}(\sum_{i\geq0}R_iq^\ep_{n-i})w^n=\sum_{i\geq0}R_iw^i\cdot\sum_{j\geq0}q^\epsilon_jw^j\\
&=\exp\Big(\sum_{n\geq1}
\frac{w^{n}}{n}\eta_n p_{n}\Big)\exp\Big(\sum_{n\geq1}
\frac{w^n}{n\epsilon_n}p_{n}\Big)=\exp\Big(\sum_{n\geq1}
\frac{w^n}{n\epsilon_n}b^np_{n}\Big)\\
&=\sum_{n\geq0}q^\epsilon_n(bw)^n,   %\qed
\end{align*}
which implies the result.
\end{proof}
\begin{theorem}\label{T:target operator}
Fix constants  $a,b, c$, and set
\begin{align} \label{E:relation1}
&\tau_n=1-a^n, \nonumber \\
&\eta_n=(1-a^n)c^n, \\
&\epsilon_n=\frac{b^n-1}{1-a^n}c^{-n}. \nonumber
\end{align}
Then $X^{\eta\tau}_0$ is self-adjoint and raises $q^\epsilon_\la$'s, i.e. $X^{\eta\tau}_0.q^\epsilon_\la=\sum_{\mu\geq\la}c_{\la\mu}q^\epsilon_\mu$.
Moreover $$c_{\la\la}=1+(1-a)\sum_{i=1}^{l(\la)}(b^{\la_i}-1)a^{i-1}.$$
\end{theorem}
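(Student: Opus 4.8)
The plan is to factor the vertex operator as $X^{\eta\tau}(w) = X_-(w)X_+(w)$ into its creation part $X_-(w) = \exp\big(\sum_{n\geq1}\frac{w^n}{n}\eta_n h_{-n}\big)$ and its annihilation part $X_+(w) = \exp\big(\sum_{n\geq1}\frac{w^{-n}}{n}\tau_n h_n\big)$, to understand each piece on $q_\la^\ep$, and only at the end to extract the coefficient of $w^0$. Self-adjointness is immediate from Lemma \ref{L:conjugate}: the prescribed relations give $\eta_n = (1-a^n)c^n = \tau_n c^n$ with $\eta_1\tau_1 = (1-a)^2 c \neq 0$.

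For the annihilation part I would first observe that the generating series $G(z) = \sum_m q_m^\ep z^m = \exp\big(\sum_{n\geq1}\frac{z^n}{n\ep_n}p_n\big)$ is a simultaneous eigenvector of every $h_n$ with $n>0$: since $h_n = n\ep_n\partial/\partial p_n$ is a derivation and $h_n.\big(\sum_m \frac{z^m}{m\ep_m}p_m\big) = z^n$, one gets $h_n.G(z) = z^n G(z)$. Hence $X_+(w).G(z) = \exp\big(\sum_{n\geq1}\frac{w^{-n}}{n}\tau_n z^n\big)G(z)$, and with $\tau_n = 1-a^n$ the prefactor telescopes to $\frac{1-az/w}{1-z/w} = 1 + (1-a)\sum_{k\geq1}(z/w)^k$. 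Reading off the coefficient of $z^m$ gives
\begin{equation*}
X_+(w).q_m^\ep = q_m^\ep + (1-a)\sum_{k\geq1}w^{-k}q_{m-k}^\ep .
\end{equation*}
Because $X_+(w)$ is the exponential of a derivation it is an algebra homomorphism, so $X_+(w).q_\la^\ep = \prod_{j=1}^s X_+(w).q_{\la_j}^\ep$.

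Next I would identify the creation part with the hypothesis of Theorem \ref{T:Newtongeneralization}. Writing $X_-(w) = \sum_{n\geq0}R_n w^n$, the relation $\ep_n = (b^n-1)(1-a^n)^{-1}c^{-n}$ gives exactly $\eta_n = \ep_n^{-1}(b^n-1)$, so Lemma \ref{L:creatingpart} yields $\sum_{i\geq1}R_i q_{n-i}^\ep = (b^n-1)q_n^\ep$. Thus $R_n$ satisfies the Newton hypothesis with $c_n = b^n-1$, and the operator $T$ of \eqref{F:defineT} obeys $T.q_\nu^\ep = \sum_{\mu\geq\nu}c_{\nu\mu}q_\mu^\ep$ with $c_{\nu\nu} = (-1)^{l(\nu)-1}(b^{\nu_{l(\nu)}}-1)$. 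I would then expand $X_+(w).q_\la^\ep$ over subsets $S\subseteq\{1,\dots,s\}$ (factor $j$ contributes its $q_{\la_j}^\ep$ term when $j\notin S$ and a $(1-a)w^{-k_j}q_{\la_j-k_j}^\ep$ term when $j\in S$), apply $X_-(w)$, and take the coefficient of $w^0$. For each $S$ the inner sum over the $k_j$ together with the factor $R_{\sum k_j}$ is precisely $T.q_{\la_S}^\ep$, where $\la_S = (\la_j)_{j\in S}$; hence
\begin{equation*}
X_0^{\eta\tau}.q_\la^\ep = q_\la^\ep + \sum_{\emptyset\neq S}(1-a)^{|S|}\big(T.q_{\la_S}^\ep\big)\prod_{j\notin S}q_{\la_j}^\ep .
\end{equation*}
Since $T.q_{\la_S}^\ep$ is supported on partitions $\nu\geq\la_S$ and $\la_S\cup\{\la_j:j\notin S\} = \la$, the union property of partitions forces every resulting $q_\mu^\ep$ to have $\mu\geq\la$, which is the triangularity.

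Finally I would compute $c_{\la\la}$. The diagonal term $q_\la^\ep$ can arise from a block $S$ only through $\nu\cup\{\la_j:j\notin S\} = \la$, and cancelling the common multiset forces $\nu = \la_S$; so $S$ contributes $(1-a)^{|S|}c_{\la_S\la_S} = (1-a)^{|S|}(-1)^{|S|-1}(b^{\la_{\max S}}-1)$, the smallest part of $\la_S$ being $\la_{\max S}$. Grouping subsets by $i=\max S$ and using $\sum_{S'\subseteq\{1,\dots,i-1\}}(-(1-a))^{|S'|} = a^{i-1}$ collapses the total to $c_{\la\la} = 1 + (1-a)\sum_{i=1}^{l(\la)}(b^{\la_i}-1)a^{i-1}$, as claimed. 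The main obstacle is the bookkeeping in the subset expansion — correctly matching each block of summation indices to a copy of $T$ and invoking strict union-monotonicity to isolate the diagonal — rather than any single hard estimate; the eigenvector computation and the binomial collapse are routine once the framework is in place.
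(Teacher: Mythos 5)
Your proof is correct and follows essentially the same route as the paper: self-adjointness via Lemma \ref{L:conjugate}, the expansion (\ref{E:actionofX0}) (which you derive by letting the annihilation half act on each factor $q^\ep_{\la_j}$ through the eigenvector property of the generating series, rather than by normal-ordering $X^{\eta\tau}(z)$ against $Y^\ep(w_1)\cdots Y^\ep(w_s).1$ --- an equivalent computation), and then Lemma \ref{L:creatingpart} and Theorem \ref{T:Newtongeneralization} with $c_n=b^n-1$ together with the union property of partitions. Your subset decomposition, the multiset-cancellation argument isolating $\nu=\la_S$, and the collapse $\sum_{S'\subseteq\{1,\dots,i-1\}}(-(1-a))^{|S'|}=a^{i-1}$ correctly fill in the diagonal-coefficient bookkeeping that the paper compresses into the phrase ``the leading coefficient has the desired form.''
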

\begin{proof} The equations (\ref{E:relation1}) are equivalent to the following relations
\begin{align} \label{E:relation2}
&\tau_n=1-a^n, \nonumber \\
&\eta_n=\epsilon_n^{-1}(b^n-1),\\
&\eta_n=c^{n}\tau_n. \nonumber
\end{align}
By Lemma \ref{L:conjugate}, $X^{\eta\tau}_0$ is self-adjoint.

We also need another operator on $\Lambda(\ep)$ defined by:
\begin{equation}\label{E:qgenerating}
Y^\epsilon(w)=\exp\Big(\sum_{n\geq1}
\frac{w^n}{n\epsilon_n}h_{-n}\Big)=\sum_n Y_{n}^\epsilon w^n.
\end{equation}
Note that the action of $Y^\ep_n$ on $\Lambda(\ep)$ is the multiplication of $q^\ep_n$, i.e. $Y^\ep_{n}.v=q^\ep_{n}\cdot v$.

For a partition $\la=(\la_1,\dots,\la_s)$,
$X^{\eta\tau}_0.q^\epsilon_\la$ is the coefficient of $z^0w_1^{\la_1}\cdot w_s^{\la_s}$ in the following
\begin{align*}
&X^{\eta\tau}(z).Y^\epsilon(w_1)\cdots Y^\epsilon(w_s).1\\
&=\exp\Big(\sum_{n\geq1}
\frac{z^{n}}{n}\eta_n h_{-n}\Big)\exp\Big(\sum_{n\geq1}
\frac{z^{-n}}{n}\tau_n h_n\Big).\prod_{i=1}^s \exp\Big(\sum_{n\geq1}
\frac{w_i^n}{n\epsilon_n}h_{-n}\Big).1\\
&=\exp\Big(\sum_{n\geq1}
\frac{z^{n}}{n}\eta_n h_{-n}\Big)\prod_{i=1}^s \exp\Big(\sum_{n\geq1}
\frac{w_i^n}{n\epsilon_n}h_{-n}\Big)\prod_{i=1}^s \exp\Big(\sum_{n\geq1}
\frac{(w_i/z)^{n}}{n}\tau_n\Big).1\\
&=\sum_{n\geq0}R_nz^n\cdot\sum_{n_1,\dots,n_s\geq0}q_{n_1}w_1^{n_1}\cdots q_{n_s}w_s^{n_s}\cdot\prod_{i=1}^s\frac{1-aw_i/z}{1-w_i/z}.
\end{align*}
Note that $\frac{1-aw_i/z}{1-w_i/z}=\sum_{k\geq0}d_k (w_i/z)^k$, with $d_0=1$ and $d_k=1-a$ for $k\geq1$. We have
\begin{align}\label{E:actionofX0}
X^{\eta\tau}_0.q^\epsilon_\la=\sum_{i_1,\dots,i_s\geq0}R_{i_1+\cdots+i_s}d_{i_1}q^\epsilon_{\la_1-i_1}\cdots d_{i_s}q^\epsilon_{\la_s-i_s}.
\end{align}

With the assistance of Lemma \ref{L:creatingpart}, we can apply Theorem \ref{T:Newtongeneralization} to the right side of expression (\ref{E:actionofX0}) with $c_n=b^n-1$. Note that although these $i_j$'s start from $0$, this will not affect the triangularity of $X^{\eta\tau}_0.q^\epsilon_\la$ by the fact of unions of
partitions (cf. before
Theorem \ref{T:Newtongeneralization}),
and the leading coefficient has the desired form.
\end{proof}
\begin{corollary} For the algebra of symmetric functions $\Lambda(\ep)$, set $\epsilon_n=\frac{b^n-1}{1-a^n}c^{-n}$.
Then for each partition $\la$, there is a unique symmetric function $Q_\la$ such that\\
(1) $Q_\la= \sum_{\mu\geq\la}C_{\la\mu}q^\epsilon_\mu$ with $C_{\la\la}=1,$\\
(2) $Q_\la$ is an eigenvector of  $X^{\eta\tau}_0$.\\
Moreover these $Q_\la$'s give rise to an orthogonal basis of $\Lambda$ and $X^{\eta\tau}_0.Q_\la=c_{\la\la}Q_\la$ with $c_{\la\la}$ given in Theorem \ref{T:target operator}.
\end{corollary}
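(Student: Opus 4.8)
The plan is to run the standard argument that a self-adjoint operator acting triangularly with \emph{separated} diagonal entries admits a unique triangular eigenbasis which is automatically orthogonal; the only genuinely new ingredient is the separation of the diagonal eigenvalues, which I read off from the explicit formula in Theorem \ref{T:target operator}. First I would reduce to a single degree: since $X^{\eta\tau}_0$ is assembled from the balanced terms $h_{-\mu}h_\nu$ with $|\mu|=|\nu|$ (cf. the expression in the proof of Lemma \ref{L:conjugate}), it preserves each homogeneous component $\Lambda(\ep)^n$, which is finite-dimensional with basis $\{q^\ep_\la:\la\vdash n\}$. By Theorem \ref{T:target operator}, in this basis, ordered by any linear refinement of the dominance order, the operator is upper triangular: $X^{\eta\tau}_0.q^\ep_\la=c_{\la\la}q^\ep_\la+\sum_{\mu>\la}c_{\la\mu}q^\ep_\mu$, with the diagonal entries $c_{\la\la}$ given explicitly.

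Next I would construct $Q_\la$ by undetermined coefficients. Positing $Q_\la=q^\ep_\la+\sum_{\mu>\la}C_{\la\mu}q^\ep_\mu$ and imposing that it be an eigenvector, one sees that the eigenvalue is forced to be the diagonal entry $c_{\la\la}$. Comparing the coefficient of $q^\ep_\nu$ on both sides of $X^{\eta\tau}_0.Q_\la=c_{\la\la}Q_\la$ yields the recursion
$$(c_{\la\la}-c_{\nu\nu})C_{\la\nu}=\sum_{\la\leq\mu<\nu}c_{\mu\nu}C_{\la\mu},$$
which determines each $C_{\la\nu}$ uniquely by induction along the order, \emph{provided} $c_{\la\la}\neq c_{\nu\nu}$ whenever $\nu>\la$. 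This simultaneously gives existence and uniqueness of $Q_\la$ satisfying (1) and (2), and shows that the transition matrix to $\{q^\ep_\mu\}$ is unitriangular, so the $Q_\la$ form a basis.

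The crux, and the step I expect to be the main obstacle, is the eigenvalue separation. Using $c_{\la\la}=1+(1-a)\sum_i(b^{\la_i}-1)a^{i-1}$ from Theorem \ref{T:target operator}, I would compute
$$c_{\la\la}-c_{\mu\mu}=(1-a)\sum_{i\geq1}(b^{\la_i}-b^{\mu_i})a^{i-1}.$$
For distinct $\la,\mu$ of the same degree, letting $k$ be the least index with $\la_k\neq\mu_k$, the coefficient of $a^{k-1}$ is $b^{\la_k}-b^{\mu_k}\neq0$, so the difference is a nonzero polynomial in $a,b$. Hence over the generic base field $F^\ep=\mathbb{Q}(a,b,c)$ (equivalently, for all but finitely many specializations, and provided $a\neq1$) one has $c_{\la\la}\neq c_{\mu\mu}$ for \emph{all} distinct same-degree $\la,\mu$, in particular for the comparable pairs needed in the recursion. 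Pinning down the precise genericity hypothesis on the parameters is the only delicate point; everything else is formal.

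Finally I would deduce orthogonality from self-adjointness. For $\la\neq\mu$, self-adjointness of $X^{\eta\tau}_0$ gives $c_{\la\la}\langle Q_\la,Q_\mu\rangle=\langle X^{\eta\tau}_0.Q_\la,Q_\mu\rangle=\langle Q_\la,X^{\eta\tau}_0.Q_\mu\rangle=c_{\mu\mu}\langle Q_\la,Q_\mu\rangle$. If $|\la|\neq|\mu|$ orthogonality is automatic, as $Q_\la$ and $Q_\mu$ lie in orthogonal homogeneous components; if $|\la|=|\mu|$ the separation just established forces $\langle Q_\la,Q_\mu\rangle=0$. Thus $\{Q_\la\}$ is an orthogonal basis, and $X^{\eta\tau}_0.Q_\la=c_{\la\la}Q_\la$ holds by construction, completing the proof.
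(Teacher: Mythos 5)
Your proposal is correct and takes essentially the same route as the paper: the paper likewise constructs $Q_\la$ by the recursion $C_{\la\nu}=\bigl(\sum_{\nu>\mu\geq\la}C_{\la\mu}c_{\mu\nu}\bigr)/(c_{\la\la}-c_{\nu\nu})$ with induction along the dominance order, and derives orthogonality from self-adjointness by the same two-sided pairing computation. The one substantive point where you go beyond the paper is the eigenvalue separation: the paper merely asserts ``$c_{\la\la}\neq c_{\mu\mu}$ for $\la\neq\mu$'', whereas you prove it, writing $c_{\la\la}-c_{\mu\mu}=(1-a)\sum_{i}(b^{\la_i}-b^{\mu_i})a^{i-1}$ and isolating the first index at which $\la$ and $\mu$ differ to see this is a nonzero polynomial in $a,b$. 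Your genericity caveat is also exactly right, not a pedantic worry: the separation holds only for generic parameters --- for instance in the Hall--Littlewood specialization of the paper's Remark, case (C) (i.e.\ $b=q=0$), the eigenvalue $c_{\la\la}$ depends only on $l(\la)$, so distinct same-length partitions share an eigenvalue and the denominator in the recursion vanishes --- so working over $\mathbb{Q}(a,b,c)$ with $a,b,c$ indeterminates (or away from such degenerate specializations) is precisely the hypothesis the corollary tacitly assumes.
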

\begin{proof}
We first prove the second statement by assuming the first statement is true. We first have $X^{\eta\tau}_0.Q_\la=c_{\la\la}Q_\la$ by Theorem \ref{T:target operator}. Note that $c_{\la\la}\neq c_{\mu\mu}$ for $\la\neq\mu$, and also $X^{\eta\tau}_0$ is self-adjoint. We have orthogonality by the following
\begin{align*}
&c_{\la\la}\langle Q_\la,Q_\mu\rangle=\langle X^{\eta\tau}_0.Q_\la,Q_\mu\rangle=\langle Q_\la,X^{\eta\tau}_0.Q_\mu\rangle\\
&=c_{\mu\mu}\langle Q_\la,Q_\mu\rangle.
\end{align*}
 Now we turn to prove the first statement. We need to show that there is a unique family $C_{\la\mu}, \mu\geq\la$ with $C_{\la\la}=1$ such that $\sum_{\mu\geq\la}C_{\la\mu}q_\mu^\epsilon$ is an eigenvector of $X^{\eta\tau}_0$. Thus we need
\begin{align}\label{E:findingcoefficients}
\sum_{\mu\geq\la}c_{\la\la}C_{\la\mu}q_\mu^\epsilon=\sum_{\mu\geq\la}C_{\la\mu}X^{\eta\tau}_0.q_\mu^\epsilon.
\end{align}
Notice here that the eigenvalue has to be $c_{\la\la}$.
Set $X^{\eta\tau}_0.q_\mu^\epsilon=\sum_{\nu\geq\mu} c_{\mu\nu}q_\nu^\epsilon$. Consider the coefficients of $q^\ep_\nu$ for $\nu\geq\la$ in both sides of (\ref{E:findingcoefficients}). For $\nu=\la$, the coefficients in both sides are already equal, and $C_{\la\la}=1$ is fixed. For $\nu>\la$, we need
\begin{align*}
C_{\la\nu}=\frac{\sum_{\nu>\mu\geq\la}C_{\la\mu}c_{\mu\nu}}{c_{\la\la}-c_{\nu\nu}}.
\end{align*}
Starting from $C_{\la\la}=1$, and making induction on the dominance order, each $C_{\la\nu}$ can be found uniquely.
\end{proof}
\begin{remark} Several specializations of $a,b,c$ lead to some interesting results. \\
(A) Set $a=t, b=c=q^{-1}$; then $\epsilon_n=\frac{1-q^n}{1-t^n}$ corresponds to the standard Macdonald case, with the vertex operator and eigenvalue
\begin{align*}
&X^{\eta\tau}(z)=\exp\Big(\sum_{n\geq1}
(1-t^n)q^{-n}\frac{z^{n}h_{-n}}{n} \Big) \exp\Big(\sum_{n\geq1}
(1-t^n) \frac{z^{-n}h_n}{n}\Big),\\
&c_{\la\la}=1+(1-t)\sum_{i=1}^{l(\la)}(q^{-\la_i}-1)t^{i-1}.
\end{align*}
(B) Setting $b=q,~c=t,~a=t^{-1}$, we have $\epsilon_n=\frac{q^n-1}{t^n-1}$. This also gives the Macdonald case, with
 \begin{align*}
 &X^{\eta\tau}(z)=\exp\Big(\sum_{n\geq1}
\frac{h_{-n}z^{n}}{n}(t^{n}-1) \Big)\exp\Big(\sum_{n\geq1}
\frac{h_nz^{-n}}{n}(1-t^{-n})\Big),\\
&c_{\la\la}=1+(1-t^{-1})\sum_{i=1}^{l(\la)}(q^{\la_i}-1)t^{1-i}.
\end{align*}
In this specialization, if we set $E=(X^{\eta\tau}_0-1)/(t-1)$, then $E$ is exactly the limit operator used by Macdonald on p. 321 (Sec. 4, ch. 6) in \cite{M}.\\\\
(C) In (B), we can still set $q=0$ to get an operator diagonalizing Hall-Littlewood functions. The corresponding eigenvalues are different for partitions of different length.\\\\
(D) Setting $b=a^{-1}=c$, we have $\epsilon_n=1$ and this is the Schur case, with
 \begin{align*}
 &X^{\eta\tau}(z)=\exp\Big(\sum_{n\geq1}
\frac{h_{-n}z^{n}}{n}(a^{-n}-1) \Big)\exp\Big(\sum_{n\geq1}
\frac{h_nz^{-n}}{n}(1-a^n)\Big),\\
&c_{\la\la}=1+(1-a)\sum_{i=1}^{l(\la)}(a^{-\la_i}-1)a^{i-1}.
\end{align*}
It can be observed that the eigenvalues associated with different partitions are different.
\end{remark}
\subsection{Example: Jing-J\"ozefiak formula}

We can use Lemma \ref{L:tworowaction} and expression (\ref{E:actionofX0}) to find an explicit formula for two-row functions $Q_{(m,n)}$. Note that now $c_i=b^{i}-1$, and we have
\begin{align*}
X_0^{\eta\tau}.q^\ep_{m,n}&=q^\ep_{(m,n)}+(1-a)[(b^m-1)+(b^n-1)]q^\ep_{(m,n)}\\
           &\qquad+(1-a)^2[\sum_{(i,j)>(m,n)}[(b^i-1)-(b^j-1)]q^\ep_{(i,j)}-(b^n-1)q^\ep_{(m,n)}].
\end{align*}
Setting $X'_0=\frac{X_0^{\eta\tau}-1}{1-a}$, we have
\begin{align*}
X'_0.q^\ep_{(m,n)}&=[(b^m-1)+a(b^n-1)]q^\ep_{(m,n)}+(1-a)\sum_{(i,j)>(m,n)}(b^i-b^j)]q^\ep_{(i,j)}.
\end{align*}
Let $X'_0$ act on both sides of $Q_{(m,n)}=\sum_{i\geq0}g_iq^\ep_{m+i,n-i}$ and consider the coefficients of $q^\ep_{(m+i,n-i)}$ ($g_0=1$). we have:
\begin{align}
\label{F:iterative2row}
&g_i=\frac{(1-a)(b^{m-n+2i}-1)}{(1-b^i)(b^{m-n+i}-a)}\sum_{j<i}g_j.
\end{align}
Starting from $g_0=1$ and iteratively using formula \ref{F:iterative2row}, we can compute $g_1,g_2,\dots$, find the general expression for $g_i$ and prove the following theorem, where both the specializations $a=t, b=q^{-1}$ and $a=t^{-1}, b=q$ lead to the coefficients for two-row Macdonald functions.

Note that the two-row formula has been generalized to multiple rows by Lassalle and Schlosser \cite{LS},
see also \cite{EW}.

\begin{theorem}\cite{JJ} Assume that $Q_{(m,n)}=\sum_{i\geq0}g_iq^\ep_{m+i}q^\ep_{n-i}$, then $g_0=1$ and for $i\geq1$
\begin{equation}\label{E:JJ}
g_i=\frac{(1-ab^0)\cdots(1-ab^{i-1})}{(1-b^1)\cdots(1-b^i)}
\frac{(b^{m-n+1}-1)\cdots(b^{m-n+(i-1)}-1)}{(b^{m-n+1}-a)\cdots(b^{m-n+i}-a)}(b^{m-n+2i}-1).
\end{equation}
\end{theorem}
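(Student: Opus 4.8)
The plan is to derive the closed form \eqref{E:JJ} directly from the recurrence \eqref{F:iterative2row} by passing to partial sums, rather than verifying the formula through a brute-force induction. Write $k=m-n$ to shorten the exponents and abbreviate the coefficient appearing in \eqref{F:iterative2row} by
$$C_i=\frac{(1-a)(b^{k+2i}-1)}{(1-b^i)(b^{k+i}-a)},\qquad S_i=\sum_{j=0}^i g_j,$$
so that \eqref{F:iterative2row} reads $g_i=C_iS_{i-1}$ for $i\geq1$, with the initial data $g_0=S_0=1$. Since $S_i=S_{i-1}+g_i=(1+C_i)S_{i-1}$, the partial sums obey a pure product recursion, whence $S_i=\prod_{l=1}^i(1+C_l)$.

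The key step, and the one I expect to require the most care, is the simplification of $1+C_l$. First I would place $1+C_l$ over the common denominator $(1-b^l)(b^{k+l}-a)$ and expand the numerator
$$(1-b^l)(b^{k+l}-a)+(1-a)(b^{k+2l}-1),$$
which after cancellation collapses to $b^{k+l}+ab^l-1-ab^{k+2l}$. The heart of the argument is to recognize that this factors as $(1-ab^l)(b^{k+l}-1)$, by grouping $(b^{k+l}-1)-ab^l(b^{k+l}-1)$. Consequently
$$1+C_l=\frac{(1-ab^l)(b^{k+l}-1)}{(1-b^l)(b^{k+l}-a)}.$$
Everything downstream of this factorization is bookkeeping.

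With this in hand the product for $S_{i-1}$ becomes
$$S_{i-1}=\prod_{l=1}^{i-1}\frac{(1-ab^l)(b^{k+l}-1)}{(1-b^l)(b^{k+l}-a)},$$
and it remains to multiply by $C_i$ and absorb the prefactor. The factor $1-a=1-ab^0$ extends $\prod_{l=1}^{i-1}(1-ab^l)$ to $\prod_{l=0}^{i-1}(1-ab^l)$; the factor $1-b^i$ extends the denominator to $\prod_{l=1}^{i}(1-b^l)$; the factor $b^{k+i}-a$ extends $\prod_{l=1}^{i-1}(b^{k+l}-a)$ to $\prod_{l=1}^{i}(b^{k+l}-a)$; while $b^{k+2i}-1$ survives as the final factor and $\prod_{l=1}^{i-1}(b^{k+l}-1)$ is left untouched. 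Substituting $k=m-n$ then yields exactly \eqref{E:JJ}, and the base case $g_0=1$ is immediate. Finally I would note that the specializations $a=t,\,b=q^{-1}$ and $a=t^{-1},\,b=q$ recover the two known normalizations of the two-row Macdonald coefficients.
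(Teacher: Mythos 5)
Your proposal is correct: the recurrence $g_i=C_iS_{i-1}$, the product recursion $S_i=(1+C_i)S_{i-1}$, and the factorization $1+C_l=\frac{(1-ab^l)(b^{k+l}-1)}{(1-b^l)(b^{k+l}-a)}$ all check out, and the final bookkeeping does reproduce \eqref{E:JJ}. The route is genuinely different in organization from the paper's, though it pivots on the same algebra. The paper proves the theorem by induction on $i$: assuming \eqref{E:JJ} up to $i=s$, it uses the recurrence \emph{backwards} to write $\sum_{j<s}g_j=g_s/C_s$, so that $\sum_{j<s+1}g_j=g_s(1+C_s)/C_s=g_s\frac{(b^{m-n+s}-1)(1-ab^s)}{(1-a)(b^{m-n+2s}-1)}$, and then applies the recurrence once more to get $g_{s+1}$ and checks it matches the closed form. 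The identity $(1-b^s)(b^{k+s}-a)+(1-a)(b^{k+2s}-1)=(1-ab^s)(b^{k+s}-1)$ that powers the paper's simplification of $g_s(1+C_s)/C_s$ is exactly your factorization of $1+C_l$; in that sense the two arguments have identical algebraic content, and indeed both reduce to the same ratio $g_{s+1}/g_s=C_{s+1}(1+C_s)/C_s$. What your version buys is that it is constructive: by telescoping the partial sums you \emph{derive} the product formula from $g_0=1$ without needing to know the answer in advance, whereas the paper's induction only verifies a formula quoted from Jing--J\'ozefiak. The paper's version is marginally shorter on the page precisely because it can lean on the stated target at each step. One small presentational point: you should state explicitly (as the paper does implicitly) that the recurrence \eqref{F:iterative2row} itself is taken as already established from Lemma \ref{L:tworowaction} and the action of $X'_0$; your proof, like the paper's, begins only after that reduction.
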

\begin{proof}
Assume that (\ref{E:JJ}) is true for $i\leq s$. Then these evaluations of $g_1,\dots,g_s$ satisfy equation (\ref{F:iterative2row}) for $i=s$; thus we have
\begin{align*}
&\sum_{j<s+1}g_j=g_s+\sum_{j<s}g_j=g_s+g_s\frac{(b^{m-n+s}-a)(1-b^s)}{(1-a)(b^{m-n+2s}-1)}\\
&=g_s\frac{(b^{m-n+s}-1)(1-ab^s)}{(1-a)(b^{m-n+2s}-1)}.
\end{align*}
By formula \ref{F:iterative2row} we have
\begin{align*}
g_{s+1}&=\frac{(1-a)(b^{m-n+2s+2}-1)}{(1-b^{s+1})(b^{m-n+s+1}-a)}\sum_{j<s+1}g_j\\
       &=\frac{(1-a)(b^{m-n+2s+2}-1)}{(1-b^{s+1})(b^{m-n+s+1}-a)}g_s\frac{(b^{m-n+s}-1)(1-ab^s)}{(1-a)(b^{m-n+2s}-1)}.
\end{align*}
Replacing $g_s$ by its expression, we find that Eq. (\ref{E:JJ}) is true for $i=s+1$.
\end{proof}

\centerline{\bf Acknowledgments}
The first author would like to thank the support of Universit\"at Basel.
He also thanks Professor Michael Schlosser for discussion. The second author
gratefully acknowledges the partial support of Simons Foundation grant 198129,
and NSF grants 1014554 and 1137837 during this work.

\bibliographystyle{amsalpha}

\end{document}